\DeclareMathOperator{\conv}{conv}
\DeclareMathOperator{\relint}{relint}
\newcommand{\R}{\mathbb R}
\def\F{\mathcal{F}}
\def\ZZ{{\mathbb{Z}}}
\def\Q{{\mathbb{Q}}}
\def\G{{\mathcal G}}
\def\T{{\mathcal{T}}}
\def\bd{{\sf bd}}
\def\reals{{\mathbb R}}
\def\CH{{{\EuScript{CH}}}}
\def\A{{\EuScript{A}}}
\def\B{{\EuScript{B}}}
\def\I{{\mathcal I}}
\def\H{{{\cal{G}}}}
\def\V{{{\EuScript{V}}}}
\def\E{{{\mathcal{E}}}}
\newtheorem{theorem}{Theorem}
\newtheorem{conjecture}{Conjecture}
\newtheorem{claim}[theorem]{Claim}
\newtheorem*{problem}{Problem} 
\newtheorem{lemma}[theorem]{Lemma}
\theoremstyle{definition}
\newtheorem*{definition}{Definition}
\newtheorem{remark}{Remark}
\begin{document}
\title{Further Consequences of the Colorful Helly Hypothesis\footnote{The project leading to this application has received funding from European Research Council (ERC)
under the European Union's Horizon 2020 research and innovation programme under grant agreement No. 678765. The first and third authors were also supported
grant 1452/15 from Israel Science Foundation. The third author was also supported by Ralph Selig Career Development Chair in Information Theory and grant 2014384 from the U.S.-Israeli Binational Science Foundation. The second author was supported by PAPIIT project IA102118}}
\author{Leonardo Martínez-Sandoval\thanks{Department of Computer Science, Ben-Gurion University of the Negev, Beer-Sheva, Israel 84105
{\tt leontz@im.unam.mx}. Also supported
by grant 1452/15 from Israel Science Foundation.} \and Edgardo Roldán-Pensado\thanks{Centro de Ciencias Matemáticas, UNAM campus Morelia, Michoacán, México, 58190, {\tt e.roldan@im.unam.mx}.} \and Natan Rubin\thanks{Department of Computer Science, Ben-Gurion University of the Negev, Beer-Sheva, Israel 84105,
{\tt rubinnat.ac@gmail.com}. Also supported
by grant 1452/15 from Israel Science Foundation and by grant 2014384 from the U.S.-Israeli Binational Science Foundation.}}
\maketitle

\begin{abstract}
    Let $\F$ be a family of convex sets in $\reals^d$, which are colored with $d+1$ colors.
    We say that $\F$ satisfies the Colorful Helly Property if every rainbow selection of $d+1$ sets, one set from each color class,
    has a non-empty common intersection.
    The Colorful Helly Theorem of Lovász states that for any such colorful family $\F$ there is a color class $\F_i\subset \F$, for $1\leq i\leq d+1$, whose sets have a non-empty intersection. 
    We establish further consequences of the Colorful Helly hypothesis.
    In particular, we show that for each dimension $d\geq 2$ there exist numbers $f(d)$ and $g(d)$ with the following property: either one can find an additional color class whose sets can be pierced by $f(d)$ points, or all the sets in $\F$ can be crossed by $g(d)$ lines.  
\end{abstract}

\maketitle

\noindent\textbf{Keywords:} \textit{geometric transversals, convex sets, colorful Helly-type theorems, line transversals, weak epsilon-nets, transversal numbers\\}
\textbf{ACM Subject Classification:} \textit{G.2.1:Combinatorics, F.2.2: Geometrical problems and computations}

\section{Introduction}
\subsection{Helly-type theorems}
Let $\F$ be a finite family of convex sets in $\reals^d$. We say that a collection $X$ of geometric objects (e.g., points, lines, or $k$-flats -- $k$-dimensional affine subspaces of $\reals^d$) is a \emph{transversal} to $\F$, or that $\F$ can be \emph{pierced} or \emph{crossed} by $X$, if each set of $\F$ is intersected by some member of $X$. For an integer $j$ we use the symbol $\binom{\mathcal{F}}{j}$ to denote the collection of subfamilies of $\mathcal{F}$ of size $j$.

The 1913 theorem of Helly \cite{Helly1923} states that a finite family $\F$ of convex sets has a non-empty intersection (i.e., $\F$ can be pierced by a single point) if and only if each of its subsets $\F'\subset \F$ of size at most $d+1$ can be pierced by a point.

In the past 50 years Geometric Transversal Theory has been preoccupied with the following questions (see e.g. \cite{Amenta2017}, \cite{Danzer1963}, \cite{Eckhoff1990}, \cite{Goodman1993},  \cite{Wenger2017}):
\begin{itemize}
    \item Does Helly's Theorem generalize to transversals by $k$-flats, for $1\leq k\leq d-1$?
    \item Given that a significant fraction of the $(d+1)$-tuples $\F'\in \binom{\F}{d+1}$ have a non-empty intersection, can $\F$, or at least some fixed fraction of its members, be pierced by constantly many points?
\end{itemize}

The first question has been settled to the negative already for $k=1$. For instance, Santaló \cite{Santalo1940} and Danzer \cite{Danzer1957} observed that for any $n\geq 3$ there are families $\F$ of $n$ convex sets in $\reals^2$ so that any $n-1$ of the sets can be crossed by a single line transversal while no such transversal exists for $\F$.
Nevertheless, Alon and Kalai \cite{Alon1995} show that the following almost-Helly property holds for $k=d-1$: If every $d+1$ (or fewer) of the sets of $\F$ can be crossed by a hyperplane, then $\F$ admits a transversal by $h$ hyperplanes, where the number $h=h(d)$ depends only on the dimension $d$.

While the properties of hyperplane transversals largely resemble those of point transversals, this is not the case for transversals by $k$-flats of intermediate dimensions $1\leq k\leq d-2$. For example, Alon et al. \cite{Alon2002} showed that for every integers $d\geq 3,m$ and $n_0\geq m+4$ there is a family of at least $n_0$ convex sets so that any $m$ of the sets can be crossed by a line but no $m+4$ of them can;
this phenomenon can be largely attributed to the complex topological structure of the space of transversal $k$-flats.

The second question gave rise to a plethora of inter-related results in discrete geometry and topological combinatorics.

\begin{theorem}[Fractional Helly's Theorem] \label{theorem:Fractional}
    For any $d\geq 1$ and $\alpha>0$ there is a number $\beta=\beta(\alpha,d)>0$ with the following property: For every finite family $\F$ of convex sets in $\reals^d$ so that at least $\alpha\binom{|\F|}{d+1}$ of the $(d+1)$-subsets $\F'\in \binom{\F}{d+1}$ have non-empty intersection, there is a point which pierces at least $\beta |\F|$ of the sets of $\F$.
\end{theorem}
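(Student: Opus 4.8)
The plan is to translate the geometry into a statement about a single simplicial complex and then to exploit the very special structure that Helly's Theorem forces on that complex. Write $\F=\{C_1,\dots,C_n\}$ and let $N$ be the \emph{nerve} of $\F$: the complex on vertex set $\{1,\dots,n\}$ whose faces are exactly the index sets $\sigma$ with $\bigcap_{i\in\sigma}C_i\neq\emptyset$. A point piercing $k$ of the sets corresponds to a face of $N$ on $k$ vertices, so the target conclusion is simply: \emph{$N$ has a face on at least $\beta n$ vertices}. The hypothesis, meanwhile, says that $N$ has at least $\alpha\binom{n}{d+1}$ faces of dimension $d$. Thus the whole theorem becomes the assertion that a nerve with many $d$-faces must contain one large face.

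The decisive input is what Helly's Theorem contributes about $N$. By Helly, $\bigcap_{i\in\sigma}C_i\neq\emptyset$ holds \emph{if and only if} every $(d+1)$-subset of $\sigma$ has a common point; equivalently, $\sigma$ is a face of $N$ precisely when all of its $(d+1)$-subsets are. Hence every minimal non-face of $N$ has at most $d+1$ vertices, and in fact $N$ is $d$-\emph{Leray}: every induced subcomplex $N[W]$ has vanishing reduced homology above dimension $d-1$. This is immediate here from the nerve theorem, since each induced subcomplex is itself the nerve of a subfamily of convex sets, and a union of convex sets in $\reals^d$ can carry no homology in dimension $\ge d$. This is the one and only place where convexity is used; from here the problem is purely combinatorial, and its content is that \emph{dense $d$-Leray complexes contain large faces}.

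Next I would invoke the extremal theory of face numbers for such complexes. Kalai's Upper Bound Theorem for $d$-Leray complexes (proved via algebraic shifting) states that if the largest face of $N$ has at most $t$ vertices, then the number $f_d$ of $d$-faces satisfies $f_d\le\binom{n}{d+1}-\binom{n-t}{d+1}$. Reading this in the contrapositive, the hypothesis $f_d\ge\alpha\binom{n}{d+1}$ forces $\binom{n-t}{d+1}\le(1-\alpha)\binom{n}{d+1}$, which rearranges to a face on $t\ge\beta n$ vertices with $\beta=1-(1-\alpha)^{1/(d+1)}$; this is exactly the bound we need, and it is known to be best possible. If one only wants \emph{some} positive $\beta$ — which is all the statement asserts — the shifting machinery can be replaced by the original Katchalski--Liu argument, an induction on $d$ that reapplies Helly's Theorem at each step and tracks how many intersecting tuples survive, yielding a weaker but strictly positive $\beta$ that is linear in $\alpha$.

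The main obstacle is precisely this combinatorial core, and its difficulty is genuine rather than technical bookkeeping. For an \emph{arbitrary} $(d+1)$-uniform hypergraph, density bounded away from $0$ does \emph{not} force a large clique: a random hypergraph of any fixed positive density has complete subconfigurations of only logarithmic size. Consequently no pure counting or supersaturation argument can succeed for small $\alpha$, and even the elementary inductive route must feed Helly's Theorem back in at every stage. The real work therefore lies in channeling the Helly-induced Leray property into the $f$-vector inequality above; by contrast, the geometric reduction to the nerve is essentially immediate.
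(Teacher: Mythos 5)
The paper never proves this theorem: it is stated as classical background, attributed to Katchalski and Liu \cite{Katchalski1979}, and invoked later purely as a black box (through the function $\beta(\alpha,d)$ inside the proof of Lemma \ref{theorem:fractwo}). So your argument can only be compared against that citation rather than against an in-paper proof. Your proposal is correct, and it takes the ``modern,'' stronger route: reduce to the nerve, observe that Helly together with the topology of unions of convex sets makes every induced subcomplex of the nerve homologically at most $(d-1)$-dimensional (the $d$-Leray property), and then apply Kalai's Upper Bound Theorem, which yields the optimal constant $\beta=1-(1-\alpha)^{1/(d+1)}$; by contrast the Katchalski--Liu elementary counting that the paper points to (or the lexicographic-minimum argument in Matou\v{s}ek's book) gives only $\beta$ of order $\alpha/(d+1)$, which is all this paper needs. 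Your identification of the combinatorial core --- density alone never forces large cliques in $(d+1)$-uniform hypergraphs, so the Leray structure must be used --- is exactly right. Three small repairs would be needed in a full write-up: (i) the nerve theorem requires open or compact sets, so for arbitrary convex sets either quote Wegner's theorem that nerves of convex families are $d$-collapsible (hence $d$-Leray), or first shrink each set to the convex hull of one witness point per intersecting subfamily, which preserves the hypothesis and only makes piercing harder; (ii) the ``rearrangement'' of $\binom{n-t}{d+1}\le(1-\alpha)\binom{n}{d+1}$ actually gives $t\ge\bigl(1-(1-\alpha)^{1/(d+1)}\bigr)n-d$, an additive loss of $d$ that must be absorbed into $\beta$, with bounded $n$ handled by noting that a single intersecting $(d+1)$-tuple already yields a point piercing $d+1$ sets; (iii) outsourcing the core to Kalai's UBT (proved via algebraic shifting) is legitimate and non-circular, since that theorem does not rely on fractional Helly, but it does reduce an easier statement to a much deeper one, so for the qualitative claim needed here the elementary fallback you mention in passing is the more proportionate proof.
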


Theorem \ref{theorem:Fractional} was proved by Liu and Katchalski in \cite{Katchalski1979} and it is one of the key ingredients in the proof of the so called Hadwiger-Debrunner $(p,q)$-Conjecture \cite{Hadwiger1957} by Alon and Kleitman \cite{Alon1992}.

\begin{definition}
    We say that a family of convex sets has the {\it $(p,q)$-property}, for $p\geq q$, if for any $p$-subset $\F'\in \binom{\F}{p}$ there is a $q$-subset $\F''\in\binom{\F'}{q}$ with non-empty common intersection $\bigcap \F''\neq \emptyset$.
\end{definition}

\begin{theorem}[The $(p,q)$-theorem \cite{Alon1992}] \label{theorem:PQ}
    For any $d\geq 1$ and $p\geq q\geq d+1$ there is a number $P=P(p,q,d)$ with the following property: Any finite family $\F$ of convex sets in $\reals^d$ with the $(p,q)$-property can be pierced by $P$ points.
\end{theorem}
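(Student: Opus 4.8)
The plan is to reproduce the argument of Alon and Kleitman, whose engine is exactly the Fractional Helly Theorem (Theorem~\ref{theorem:Fractional}) stated above. The overall strategy splits into two phases: first show that $\F$ admits a \emph{fractional} transversal of weight bounded in terms of $p,q,d$ only, and then \emph{round} this fractional transversal to an integral family of boundedly many piercing points using weak $\epsilon$-nets for convex sets.

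First I would phrase piercing as a linear program and invoke duality. Let $\tau^*(\F)$ denote the minimum total mass of a (finitely supported) measure $\mu$ on $\reals^d$ with $\mu(S)\geq 1$ for every $S\in\F$, and let $\nu^*(\F)$ denote the maximum of $\sum_{S\in\F}y_S$ over weights $y_S\geq 0$ satisfying $\sum_{S\ni x}y_S\leq 1$ for every point $x\in\reals^d$. LP duality gives $\tau^*(\F)=\nu^*(\F)$, so it suffices to bound the fractional matching number $\nu^*(\F)$.

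The key step is to bound $\nu^*(\F)$ by a constant $C=C(p,q,d)$. Starting from an optimal rational fractional matching $y$, I would note that $y_S\leq 1$ for every $S$ (pick $x\in S$ and use the covering constraint at $x$), and then pass to a large multiset $\mathcal{M}$ of members of $\F$ with integer multiplicities $N y_S$, so that $|\mathcal{M}|=N\nu^*$ while no point of $\reals^d$ lies in more than $N$ members of $\mathcal{M}$. A double-counting argument now turns the $(p,q)$-property into a fractional-Helly hypothesis: since $q\geq d+1$, every intersecting $q$-tuple furnishes $\binom{q}{d+1}$ intersecting $(d+1)$-tuples, and comparing the number of $p$-subsets, $q$-subsets and $(d+1)$-subsets forces a positive fraction $\alpha=\alpha(p,q,d)$ of the $(d+1)$-tuples of $\mathcal{M}$ to have a common point. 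Applying Theorem~\ref{theorem:Fractional} (in its weighted/multiset form) yields a point lying in at least $\beta|\mathcal{M}|$ members of $\mathcal{M}$; but that number is at most $N$, so $\beta N\nu^*\leq N$ and hence $\tau^*=\nu^*\leq 1/\beta=:C(p,q,d)$.

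Finally I would round. Normalizing the optimal dual measure to a probability measure $\mu$ shows that every $S\in\F$ satisfies $\mu(S)\geq 1/\tau^*\geq 1/C=:\epsilon$. Invoking the existence theorem for weak $\epsilon$-nets with respect to convex sets, there is a set $\mathcal{N}$ of at most $g(1/\epsilon,d)$ points meeting every convex body of $\mu$-measure at least $\epsilon$; in particular $\mathcal{N}$ pierces every member of $\F$, giving $P=g\bigl(C(p,q,d),d\bigr)$. The hard part, and the genuinely deep geometric input, is precisely this last step: a bounded fractional transversal number does not by itself bound the integral piercing number, and it is the weak $\epsilon$-net theorem for convex sets (whose proof rests on selection-lemma and centerpoint-type machinery) that closes the gap — at the cost of the very large, essentially non-explicit, dependence of $P$ on $p$, $q$ and $d$.
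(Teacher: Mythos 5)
Your proposal is correct and follows essentially the same route the paper itself attributes to Alon and Kleitman when it outlines the proof of Theorem~\ref{theorem:PQ} in Section~\ref{Sec:StepDown}: LP duality between fractional transversals and fractional matchings, a double-counting argument that converts the $(p,q)$-property into the hypothesis of the Fractional Helly Theorem~\ref{theorem:Fractional} to bound $\nu^*$, and a final rounding step via the weak $\epsilon$-net theorem for points (Theorem~\ref{Theorem:WeakPoints}). The paper cites this result rather than reproving it, and its three-bullet outline of the Alon--Kleitman argument matches your proposal step for step.
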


The proof of Theorem \ref{theorem:PQ} combines Theorem \ref{theorem:Fractional} with the following result of independent interest.

\begin{theorem}[Weak $\epsilon$-net for points \cite{Alon1992a}]\label{Theorem:WeakPoints}
    For any dimension $d\geq 1$ and $\epsilon>0$ there is $W=W(\epsilon,d)$ with the following property: For every finite (multi-)set $P$ of points in $\reals^d$ one can find $W$ points in $\reals^d$ that pierce every convex set $A\subseteq \reals^d$ with $|A\cap P|\geq \epsilon |P|$.
\end{theorem}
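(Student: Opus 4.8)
The plan is to reduce the statement to a stabbing problem about simplices, and then to build the net greedily using the First Selection Lemma, which I would in turn extract from the Fractional Helly Theorem (Theorem~\ref{theorem:Fractional}).

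First I would reduce to convex hulls of point subsets. Since any convex $A$ with $|A\cap P|\geq \epsilon|P|$ satisfies $A\supseteq \conv(A\cap P)$, it suffices to pierce every set of the form $\conv(Q)$ with $Q\subseteq P$ and $|Q|\geq \epsilon|P|$; call such a $Q$ (and its hull) \emph{heavy}. The crucial observation is a two-way link between piercing $\conv(Q)$ and piercing the simplices it contains: if a point $x$ lies in some simplex $\conv(p_0,\dots,p_d)$ with all $p_i\in Q$, then $x\in\conv(Q)$, so stabbing any simplex spanned by $Q$ already pierces $\conv(Q)$; conversely, if $\conv(Q)$ is not pierced by a point set $N$, then no simplex spanned by $Q$ is stabbed by $N$ either. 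Thus it is enough to guarantee, for every heavy $Q$, that at least one of the $\binom{|Q|}{d+1}$ simplices spanned by $Q$ contains a net point.

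The engine is the First Selection Lemma: there is a constant $c_d>0$ so that every $m$-point set in $\reals^d$ has a point contained in at least $c_d\binom{m}{d+1}$ of the simplices spanned by its points. I would derive this from Theorem~\ref{theorem:Fractional} applied to the family of all $\binom{m}{d+1}$ simplices, viewed as convex sets. The hypothesis of the Fractional Helly Theorem requires that a positive fraction of the $(d+1)$-tuples of these simplices have a common point, and this I would verify through Tverberg's theorem: every set of $(d+1)^2$ points of $P$ can be split into $d+1$ equal simplices sharing a point, by applying a Tverberg partition to $d^2+d+1$ of them (whose parts may be taken of size at most $d+1$ by Carathéodory) and padding with the remaining $d$ points; since the number of $(d+1)^2$-subsets and the number of $(d+1)$-tuples of simplices are both of order $|P|^{(d+1)^2}$, this forces a constant fraction of the tuples to intersect. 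I expect this verification of the Fractional Helly hypothesis to be the main obstacle, as it is the one genuinely high-dimensional ingredient; everything downstream is bookkeeping.

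With the First Selection Lemma in hand, I would construct the net greedily. Starting from $N=\emptyset$, while some heavy $Q$ is not yet pierced by $N$, I apply the First Selection Lemma to the point set $Q$ to obtain a point $x$ lying in at least $c_d\binom{|Q|}{d+1}\geq c_d\binom{\lceil \epsilon|P|\rceil}{d+1}$ simplices spanned by $Q$, and I add $x$ to $N$. Each such $x$ lies in $\conv(Q)$, hence pierces $Q$; and by the converse half of the reduction, all of these simplices were previously unstabbed, so after adding $x$ the number of stabbed simplices strictly increases by at least $c_d\binom{\lceil \epsilon|P|\rceil}{d+1}$. Since there are only $\binom{|P|}{d+1}$ simplices in total, this potential argument bounds the number of iterations, and hence $|N|$, by
\[
W=W(\epsilon,d)=\left\lceil \frac{\binom{|P|}{d+1}}{c_d\binom{\lceil \epsilon|P|\rceil}{d+1}}\right\rceil = O_d\!\left(\epsilon^{-(d+1)}\right),
\]
a quantity depending only on $\epsilon$ and $d$ (the small-$|P|$ corner cases, where $\epsilon|P|<d+1$, being trivial since one may then take $N=P$). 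This yields the required bound.
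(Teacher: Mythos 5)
Your proposal cannot be compared against an internal proof, because the paper never proves Theorem~\ref{Theorem:WeakPoints}: it is quoted from \cite{Alon1992a} and used as a black box in the proof of the ``Step-Down'' Lemma. Judged on its own merits, your argument is correct, and it is essentially the classical Alon--B\'ar\'any--F\"uredi--Kleitman proof of the weak $\epsilon$-net theorem: Tverberg's theorem, with the Carath\'eodory refinement and padding (whose point count $(d+1)^2=(d^2+d+1)+d$ you get exactly right), verifies the hypothesis of the Fractional Helly Theorem~\ref{theorem:Fractional} for the family of all $\binom{|P|}{d+1}$ simplices spanned by $P$; this yields the First Selection Lemma, and the greedy potential count over simplices then gives a net of size $O_d\left(\epsilon^{-(d+1)}\right)$. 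Three routine points are left implicit and would need a line each in a full write-up: the map from $(d+1)^2$-subsets to vertex-disjoint intersecting simplex tuples is injective (the tuple determines the subset as the union of its vertex sets), which is what makes the counting comparison legitimate; the ratio $\binom{|P|}{d+1}\big/\binom{\lceil\epsilon|P|\rceil}{d+1}$ is bounded by $O_d\left(\epsilon^{-(d+1)}\right)$ \emph{uniformly} in $|P|$, including the regime where $\lceil\epsilon|P|\rceil$ is barely above $d+1$; and both Tverberg's theorem and the resulting First Selection Lemma remain valid for multisets (e.g., by perturbation and compactness), which matters since the statement allows $P$ to be a multiset. None of these is a genuine gap.
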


Understanding the asymptotic behaviour of $W(\epsilon,d)$ is one of the most challenging open problems in discrete geometry.

\medskip
The starting point of our investigation is the Colorful Helly Theorem of L\'aszl\'o Lov\'asz, first stated in \cite{Barany1982}, which concerns the scenario in which the intersecting $(d+1)$-tuples form a complete $(d+1)$-partite hypergraph.

\begin{definition}
    We say that a finite family of convex sets $\F$ is \emph{$k$-colored} if each set $K\in \F$ is colored with (at least) one of $k$ distinct colors. The $k$-coloring of $\F$ can be expressed by writing $\F$ as a union of $k$ \emph{color classes} $\F_1\cup \F_2\cup \dots \cup \F_{k}$, where each class $\F_i$ consists of the sets with color $i\in [k]$.
    We say that the $k$-colored family $\F$, with color classes $\F_1,\dots,\F_k$, has the \emph{Colorful Helly property, or $\CH(\F_1,\dots,\F_k)$} if every rainbow selection $K_i\in \F_i$, for $1\leq i\leq k$, has non-empty intersection $\bigcap_{i=1}^{k}K_i\neq \emptyset$.
\end{definition}

\begin{theorem}[Colorful Helly's Theorem] \label{theorem:CHT}
    Let $\F$ be a $(d+1)$-colored family of convex sets in $\reals^d$, with color classes $\F_1,\dots,\F_{d+1}$. Then $\CH(\F_1,\dots,\F_{d+1})$ implies that there is a color class $\F_i$ with non-empty intersection $\bigcap \F_i\neq \emptyset$.
\end{theorem}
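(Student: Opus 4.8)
The plan is to prove the statement \emph{directly} (not by contradiction) via an extremal argument driven by a generic linear functional, invoking $\CH(\F_1,\dots,\F_{d+1})$ only through the fact that every rainbow intersection is nonempty. First I would reduce to the case where every member of $\F$ is compact: since $\F$ is finite, I may intersect each set with a ball $B$ chosen large enough to contain one point from each of the finitely many rainbow intersections. This preserves the colorful hypothesis, keeps every set nonempty (an empty set would sit in a rainbow tuple and violate $\CH$), and a common point of a truncated class $\{K\cap B\}_{K\in\F_i}$ automatically lies in $B$, hence is a common point of the original class $\F_i$. Now fix a generic direction $c\in\reals^d$, so that the functional $\phi(x)=\langle c,x\rangle$ has a \emph{unique} minimizer over each of the finitely many convex sets arising below. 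For a rainbow tuple $T=(K_1,\dots,K_{d+1})$ with $K_i\in\F_i$, set $I_T=\bigcap_i K_i$ (nonempty, compact, convex) and $t(T)=\min_{x\in I_T}\phi(x)$. Among all rainbow tuples pick $T^\ast=(K_1^\ast,\dots,K_{d+1}^\ast)$ maximizing $t(T)$, let $p$ be the corresponding minimizer, and write $t^\ast=\phi(p)$.

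The heart of the argument is a \emph{droppable color} lemma: if $p$ minimizes $\phi$ over $\bigcap_{i=1}^{d+1}K_i^\ast$ in $\reals^d$, then some index $i_0$ satisfies that $p$ already minimizes $\phi$ over $\bigcap_{i\neq i_0}K_i^\ast$. To prove it I would use the first-order optimality condition $-c\in N(p)$, where $N(p)$ is the normal cone of $I_{T^\ast}$ at $p$; under a constraint qualification this cone equals the sum $\sum_i N_{K_i^\ast}(p)$ of the individual normal cones, so $-c$ lies in the convex cone generated by the outward normals contributed by the $d+1$ sets. By the conical form of Carathéodory's theorem, $-c$ is a nonnegative combination of at most $d$ of these generators, which therefore come from at most $d$ of the color classes; at least one class $i_0$ contributes none of them. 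Since $\sum_{i\neq i_0}N_{K_i^\ast}(p)\subseteq N_{\bigcap_{i\neq i_0}K_i^\ast}(p)$ holds unconditionally, $p$ still satisfies the optimality condition (sufficient here by convexity) after $K_{i_0}^\ast$ is dropped, which proves the lemma. I expect the constraint qualification to be the main obstacle, as the sum rule for normal cones can fail when the bodies meet in a lower-dimensional set. The remedy I would adopt is to replace each $K$ by the smooth body $K+\varepsilon B$, which enlarges sets and hence preserves $\CH$, run the entire argument for $\varepsilon>0$, and then let $\varepsilon\to 0$, using compactness and the pigeonhole principle to fix a single recurring color class $i_0$ in the limit.

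With the lemma in hand, write $S=\bigcap_{i\neq i_0}K_i^\ast$, so that $p=\arg\min_S\phi$ uniquely and $\phi(p)=t^\ast$. I claim $p$ lies in every $K\in\F_{i_0}$, which gives $p\in\bigcap\F_{i_0}$ and finishes the proof. Indeed, replacing $K_{i_0}^\ast$ by $K$ produces another rainbow tuple $T'$ with $I_{T'}=K\cap S$, so maximality of $T^\ast$ gives $\min_{K\cap S}\phi=t(T')\le t^\ast$, while $K\cap S\subseteq S$ forces $\min_{K\cap S}\phi\ge\min_S\phi=t^\ast$. Hence the minimum of $\phi$ over $K\cap S$ equals $t^\ast$ and is attained at a point of $S$ achieving the value $t^\ast$; by uniqueness of the minimizer over $S$ this point must be $p$, so $p\in K\cap S\subseteq K$. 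In dimension one the whole scheme specializes to an elementary endpoint computation on intervals, which serves as a useful sanity check. Apart from the constraint qualification flagged above, every remaining ingredient is a finite extremal selection together with the inequality sandwich, so I anticipate no further serious difficulty.
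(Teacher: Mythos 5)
The paper does not prove this statement: it is quoted as Lov\'asz's Colorful Helly Theorem (attributed to \cite{Barany1982}) and used as a black box, so there is no internal proof to compare against. Judged on its own merits, your proposal reconstructs what is essentially the classical Lov\'asz argument: reduce to compact sets, fix a generic linear functional $\phi$, among all rainbow tuples maximize the minimum $t(T)$ of $\phi$ over the tuple's intersection, show that at the extremal tuple some color $i_0$ can be dropped without moving the minimizer $p$, and then swap an arbitrary $K\in\F_{i_0}$ into the tuple and use the sandwich $t^*\le \min_{K\cap S}\phi$ together with $\min_{K\cap S}\phi\le t^*$ and uniqueness of the minimizer over $S$ to conclude $p\in K$. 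The overall structure and the final swap argument are correct.

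Two points deserve attention. First, a small but genuine slip in the reduction: intersecting an arbitrary convex set with a large ball makes it bounded, not compact. The theorem is stated for arbitrary convex sets, which need not be closed, while your argument needs attainment of minima and closedness in the $\varepsilon\to 0$ limit. The standard fix is to pick one witness point $x_T\in\bigcap T$ for each rainbow tuple $T$ and replace each $K$ by the polytope $\conv\{x_T : K\in T\}\subseteq K$; this preserves $\CH$, shrinks the sets (so a common point of a shrunken class pierces the original class), and makes everything compact. Second, your route to the droppable-color lemma (normal cones, the sum rule under a constraint qualification, conical Carath\'eodory, then $\varepsilon$-smoothing and a pigeonhole limit to repair the CQ) does work: enlarging by $\varepsilon B$ gives the $d+1$ sets of the extremal tuple a common interior point, which is exactly the Slater-type condition under which $N_{\bigcap_i C_i}(p)=\sum_i N_{C_i}(p)$, and compactness lets you pass to the limit along a color that recurs infinitely often. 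But it is much heavier than necessary. The standard argument derives the same lemma from ordinary Helly: the $d+2$ convex sets $K_1^*,\dots,K_{d+1}^*$ and the open halfspace $H^-=\{x : \phi(x)<t^*\}$ have empty total intersection, so by (the contrapositive of) Helly some $d+1$ of them already have empty intersection; it cannot be the subfamily omitting $H^-$, since that intersection contains $p$, so $\bigl(\bigcap_{i\ne i_0}K_i^*\bigr)\cap H^-=\emptyset$ for some $i_0$, i.e.\ $p$ minimizes $\phi$ over $\bigcap_{i\ne i_0}K_i^*$. This eliminates the constraint qualification, the smoothing, and the limiting argument in one stroke.
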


Notice that Theorem \ref{theorem:CHT} says nothing about transversals to the remaining $d$ color classes $\F_j$, with $j\in [d+1]\setminus\{i\}$.
The primary goal of this paper is to gain a deeper understanding of the transversals to \emph{all} of the color classes $\F_i$ in a $(d+1)$-colored family $\F$ that satisfies $\CH$.

Theorem \ref{theorem:CHT} is in close relation, via point-hyperplane duality, with the colorful version of the Carathéodory theorem due to Bárány \cite{Barany1982}. Holmsen \emph{et al.} \cite{Holmsen2008} and independently Arocha \emph{et al.} \cite{Arocha2009} recently established the following strengthening of Bárány's result:

\begin{theorem}[Very Colorful Carathéodory Theorem]
    Let $P$ be a finite set of points in $\reals^d$ colored with $d+1$ colors. If every $(d+1)$-colorful subset of $P$ is separated from the origin, then there exist two colors such that the subset of all points of these colors is separated from the origin.
\end{theorem}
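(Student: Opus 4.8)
The plan is to prove the contrapositive. Call a set $S\subseteq\reals^d$ \emph{separated} if $0\notin\conv(S)$. The statement then reads: if every rainbow $(d+1)$-tuple $\{p_1,\dots,p_{d+1}\}$ (with $p_i$ of color $i$) is separated, then some pair of color classes $P_i\cup P_j$ is separated. Negating both sides, it suffices to prove the following reformulation: \emph{if $0\in\conv(P_i\cup P_j)$ for every pair $i\neq j$, then there is a rainbow selection $p_1,\dots,p_{d+1}$ with $0\in\conv\{p_1,\dots,p_{d+1}\}$.} This is a genuine strengthening of Bárány's Colorful Carathéodory theorem \cite{Barany1982}: the hypothesis ``every pairwise union contains the origin'' is weaker than ``every single class contains the origin'' (since $\conv(P_i)\subseteq\conv(P_i\cup P_j)$), yet we ask for the same conclusion, a rainbow simplex through the origin.

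I would argue by induction on $d$. The base case $d=1$ (two colors on a line) is a direct sign argument: if $0\in\conv(P_1\cup P_2)$ then the union contains points on both sides of the origin, and unless both lie in the same class --- in which case any point of the other class completes a straddling rainbow pair --- they already form a rainbow pair whose segment contains $0$. For the inductive step I would first dispose of the easy case: if $0\in\conv(P_i)$ for \emph{every} $i$, then the Colorful Carathéodory theorem \cite{Barany1982}, applied to the $d+1$ classes in $\reals^d$, produces the desired rainbow simplex at once. So assume some class, say $P_{d+1}$, is separated, and fix a hyperplane through the origin with normal $u$ so that $\langle u,q\rangle>0$ for all $q\in P_{d+1}$.

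The reduction I would attempt is dimension-dropping by central projection. For each $i\le d$, the condition $0\in\conv(P_i\cup P_{d+1})$ together with $P_{d+1}\subseteq\{\langle u,\cdot\rangle>0\}$ forces $P_i$ to meet the side $\{\langle u,\cdot\rangle\le 0\}$ (a balancing computation on the $u$-coordinate of the convex combination witnessing $0$). The aim is then to choose an apex $q\in P_{d+1}$, project the remaining $d$ classes $P_1,\dots,P_d$ from the origin onto a hyperplane $H_q\cong\reals^{d-1}$ transverse to the ray $\{-\lambda q:\lambda\ge 0\}$, and obtain an instance of the \emph{same} theorem one dimension down: $d$ color classes in $H_q$, with the target point $m_q=H_q\cap\{-\lambda q:\lambda\ge0\}$ playing the role of the origin. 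An application of the inductive hypothesis would yield a rainbow $d$-tuple $p_1,\dots,p_d$ (one genuinely from each of $P_1,\dots,P_d$) whose convex hull meets the ray through $-q$; adjoining $q$ then places the true origin in $\conv\{p_1,\dots,p_d,q\}$, a full rainbow simplex.

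The hard part will be making the reduction transfer the hypothesis correctly. The pairwise conditions $0\in\conv(P_i\cup P_j)$ are statements about the origin and make no reference to the apex $q$, whereas the projected instance needs $m_q\in\conv(\pi(P_i)\cup\pi(P_j))$, i.e. the ray through $-q$ must meet each pairwise hull; a single fixed apex will not in general inherit this, and one must either let the choice of $q$ vary with the induction or project so as to preserve the directional information destroyed by a naive orthogonal projection. Equivalently phrased, the obstacle is the \emph{lifting} step: a lower-dimensional rainbow configuration surrounding a projected target must be promoted to one surrounding the genuine origin in $\reals^d$, and it is precisely here that all of the pairwise hypotheses (not merely those involving $P_{d+1}$) must be brought to bear. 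I expect that circumventing this cleanly may require replacing the naive induction by a more global argument --- for instance a parity or degree count of the rainbow simplices that contain the origin, or a Sarkaria-type tensor encoding that reduces the pairwise-union hypothesis to a single application of Colorful Carathéodory in a lifted space.
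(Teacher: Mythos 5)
There is a genuine gap, and it sits exactly where you placed your own warning flag: the inductive reduction by central projection does not work, and everything before it is only preliminary. Two concrete failures. First, the projection is not even well-defined: projecting $P_1,\dots,P_d$ from the origin onto a hyperplane $H_q$ transverse to the ray $\{-\lambda q:\lambda\ge 0\}$ requires every ray from the origin through a point of $P_1\cup\dots\cup P_d$ to actually meet $H_q$ on the correct side, and nothing in the hypotheses confines these classes to a halfspace (only their \emph{pairwise unions with $P_{d+1}$} are constrained, and only to meet the closed halfspace $\{\langle u,\cdot\rangle\le 0\}$). Second, and more fundamentally, the hypothesis does not descend: $0\in\conv(P_i\cup P_j)$ asserts that $0=\sum_k\lambda_k p_k$ is a convex combination of points of $P_i\cup P_j$, which is a conic relation with coefficient \emph{zero} on $q$; the projected hypothesis $m_q\in\conv\bigl(\pi(P_i)\cup\pi(P_j)\bigr)$ requires a relation $-\mu q=\sum_k\lambda_k t_k p_k$ with $\mu>0$ \emph{strictly}, i.e.\ that the open ray through $-q$ meets the cone of $P_i\cup P_j$. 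There is no reason for this, and for a generic apex $q$ it is simply false. Since your final paragraph concedes that this step fails and offers only unexecuted alternatives (parity/degree counting, a Sarkaria-type tensor trick), what you have is a proof plan with its central step missing, not a proof. The correct ingredients you do establish --- the contrapositive formulation, the base case $d=1$, the easy case where every class contains the origin (plain Colorful Carath\'eodory), and the observation that each $P_i$ must meet $\{\langle u,\cdot\rangle\le 0\}$ --- are the routine part.

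For comparison: the paper you are reading does not prove this statement at all; it quotes it as a known result of Holmsen et al.\ and, independently, Arocha et al. Both published proofs are global in nature rather than dimension-dropping inductions: roughly, one refines B\'ar\'any's minimal-distance rainbow-simplex deformation so that the swapping step can be driven by the pairwise hypotheses alone, and the other argues topologically (a degree-type argument). So your closing instinct that ``a more global argument'' is required is accurate --- but that global argument is precisely the content of the theorem, and it is the part your proposal does not supply.
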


Unfortunately, there is no Very Colorful Helly Theorem which guarantees that a second color class can be pierced with few points, as is illustrated by the following example (see Figure \ref{fig:HellyOpt}). Let $\F_{d+1}=\{\reals^d\}$ and, for each $1\leq i\leq d$ let $\F_i$ be a collection of hyperplanes orthogonal to the $x_i$-axis. Then $\F_{d+1}$ is the only class that has a point transversal, moreover, each of the remaining classes may need an arbitrarily large number of points in order to be pierced. Note, though, that one can cross all the sets of $\bigcup_{i=1}^{d+1} \F_i$ by a \emph{single line}.

\begin{figure}
    \begin{center}
        \includegraphics{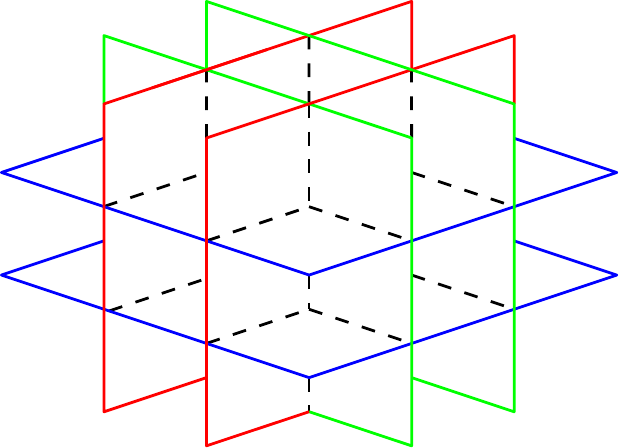}
        \caption{Optimality of the Colorful Helly Theorem in $\reals^3$. For each $1\leq i\leq 3$ the family $\F_i$ consists of $x_i$-orthogonal planes.}
        \label{fig:HellyOpt}
    \end{center}
\end{figure}

\subsection{Our results}
Our main result suggests that, in a sense, the scenario in Figure \ref{fig:HellyOpt} is the only possible unless an additional color class can be pierced by few points.

\begin{theorem}\label{theorem:main}
    For each dimension $d\geq 2$ there exist numbers $f(d)$ and $g(d)$ with the following property.
    Let $\F$ be a finite $(d+1)$-colored family of convex sets in $\R^d$ (with color classes $\F_1,\dots,\F_{d+1}$) that satisfies $\CH(\F_1,\dots,\F_{d+1})$. Let $i\in [d+1]$ be a color whose class $\F_i$ has a non-empty intersection (by Theorem \ref{theorem:CHT}).
    Then one of the following statements must also hold:
\begin{enumerate}
    \item an additional color class $\F_j$, for $j\in [d+1]\setminus \{i\}$ can be pierced by $f(d)$ points, or
    \item the entire family $\F$ can be crossed by $g(d)$ lines.
\end{enumerate}
\end{theorem}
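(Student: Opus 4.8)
The plan is to first normalize the configuration. Relabel the colors so that the distinguished class is $\F_{d+1}$, fix a point $p\in\bigcap\F_{d+1}$, and translate so that $p$ is the origin. The Colorful Helly hypothesis can then be read off as a statement about the first $d$ classes alone: for every rainbow $d$-tuple $K_1\in\F_1,\dots,K_d\in\F_d$ the convex set $C=K_1\cap\cdots\cap K_d$ is nonempty and meets every member of $\F_{d+1}$, all of which contain the origin. I will assume that alternative (1) fails, i.e.\ that for a large constant $f(d)$ to be fixed below \emph{none} of the classes $\F_j$ with $j\le d$ can be pierced by $f(d)$ points, and aim to derive alternative (2). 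A useful first reduction is to commit to line transversals through $p$: any such line automatically crosses every member of $\F_{d+1}$, so it suffices to cross $\F_1\cup\cdots\cup\F_d$. Passing to the radial projection $\pi(x)=x/\lVert x\rVert$ onto $S^{d-1}$, each set $K\not\ni 0$ has a spherically convex shadow $D(K)\subseteq S^{d-1}$ contained in an open hemisphere, a line through $0$ in direction $u$ crosses $K$ iff $u\in\pm D(K)$, and the goal becomes to pierce all the shadows by a bounded antipodal set of at most $g(d)$ directions.

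Next I would convert the failure of (1) into usable structure through the Fractional Helly Theorem (Theorem~\ref{theorem:Fractional}), the $(p,q)$-theorem (Theorem~\ref{theorem:PQ}) and the weak $\epsilon$-net theorem (Theorem~\ref{Theorem:WeakPoints}). If within some class $\F_j$ a definite fraction of the $(d+1)$-tuples had a common point, then Fractional Helly together with weak $\epsilon$-nets (exactly as in the Alon--Kleitman proof of the $(p,q)$-theorem) would pierce a large part of $\F_j$ by boundedly many points; iterating on the clustered part, this should push us into alternative (1) for that class. Thus the negation of (1) should force every class $\F_j$ to be strongly \emph{intersection-poor}, and I would feed this into the shadow picture: intersection-poorness, combined with the cross-color meeting condition that each rainbow intersection $C$ reaches every origin-containing set of $\F_{d+1}$, should prevent the shadows $D(K)$ of a class from being spread over $S^{d-1}$ in a way that a point transversal would detect, and instead concentrate them near a lower-dimensional great subsphere.

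The geometric heart, and the step I expect to be the main obstacle, is turning this concentration into an \emph{honest} bound of $g(d)$ on the number of lines. The difficulty is intrinsic: line ($1$-flat) transversals possess no Helly number, as the Santaló--Danzer and Alon et al.\ examples quoted in the introduction show, so I cannot simply invoke a Helly-type theorem for lines. My attempt would be to argue by induction on $d$ through the shadows on $S^{d-1}$: show that in the intersection-poor regime the shadows of each class cluster around a great subsphere of dimension at most $d-2$, which corresponds to the sets of that class being aligned transversally to a fixed direction (precisely the behaviour of the parallel hyperplanes in Figure~\ref{fig:HellyOpt}); one then projects out that direction and applies the inductive hypothesis, or else applies the Alon--Kalai hyperplane-transversal theorem in the complementary coordinate to obtain a bounded hyperplane transversal and thins the hyperplanes down to lines using the concentration of directions. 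Because only $d$ classes remain, summing the resulting direction sets keeps the total number of lines bounded by a function $g(d)$ of the dimension alone.

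The two cases must finally be glued at the exact threshold where the shadows fail or succeed to concentrate: whenever the inductive projection step breaks down, the obstruction should be a positive fraction of sets lying in a bounded region, on which a weak $\epsilon$-net (Theorem~\ref{Theorem:WeakPoints}) applied to witness points of the rainbow intersections yields a point transversal of a constant fraction of some class, returning us to alternative (1). Making this gluing quantitative --- so that ``not pierceable by $f(d)$ points'' and ``crossable by $g(d)$ lines'' become genuinely complementary with constants depending only on $d$ --- is where I expect the real work to lie, and it is the reason the argument must route through weak $\epsilon$-nets rather than through a (nonexistent) Helly number for line transversals.
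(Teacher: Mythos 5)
Your proposal is a plan rather than a proof, and its two load-bearing steps both contain genuine gaps. First, the step that converts ``$\F_j$ is not pierceable by $f(d)$ points'' into ``$\F_j$ is intersection-poor'' does not follow from Fractional Helly plus weak $\epsilon$-nets in the way you describe. Fractional Helly pierces only a $\beta$-fraction of $\F_j$ with one point, and you cannot simply iterate on the rest: after removing the pierced sets, the hypothesis ``a constant fraction of $(d+1)$-tuples intersect'' need not persist, because unlike the $(p,q)$-property it is not hereditary. The Alon--Kleitman machinery circumvents this by bounding the \emph{fractional matching number} of a hypergraph, i.e.\ by establishing the fractional-Helly-type hypothesis for every reweighting (multiset) of the family; in the present colorful setting the hypothesis that survives reweighting is the \emph{bipartite} one (every $A\in\A$ meets every $B\in\B$), not the unary one you work with. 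This is exactly why the paper routes everything through a $2$-colored lemma (Lemma \ref{proposition:twocolors}: either $\bigcap\A\neq\emptyset$ or $\B$ is crossed by $d$ hyperplanes), its fractional version (Lemma \ref{theorem:fractwo}), and an LP-duality argument run simultaneously on two hypergraphs $\H_0(\A)$ and $\H_{k-1}(\B)$.

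Second, the ``geometric heart'' of your outline --- that intersection-poorness forces the radial shadows of a class to concentrate near a great subsphere, so that an induction on $d$ or an application of Alon--Kalai can be ``thinned down'' from hyperplanes to lines --- is asserted, not proved, and it is precisely the hard part; you acknowledge this yourself. (Your restriction to lines through the fixed point $p$ is also an unforced strengthening: nothing guarantees that a family crossable by $g(d)$ lines is crossable by boundedly many \emph{concurrent} lines, and the paper simply spends one extra line through $p$ on $\F_{d+1}$.) The paper's mechanism for descending from hyperplanes to lines is the Step-Down Lemma (Lemma \ref{Lemma:Iterate}): if all pairwise intersections $\I(\A,\B)$ are crossed by $m$ $k$-flats, then either $\A$ is pierced by $F(m,k,d)$ points or $\B$ is crossed by $G(m,k,d)$ $(k-1)$-flats. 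The hypothesis that the intersections live inside few $k$-flats is what makes the Alon--Kalai weak $\epsilon$-net for hyperplanes applicable \emph{inside} those flats (where $(k-1)$-flats are hyperplanes), sidestepping the nonexistence of Helly-type and $\epsilon$-net theorems for flats of intermediate dimension --- the very obstruction you identify but do not overcome. Iterating this lemma over the color classes, one class at a time, reduces the transversal dimension from $d$ down to $1$ in $d-1$ steps; your outline contains no substitute for this mechanism, and without it the passage from hyperplane transversals to line transversals remains unbridged.
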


Theorem \ref{theorem:main} is equivalent to the following statement concerning $d$-colored families of convex sets.

\begin{theorem}\label{theorem:main-d}
    For each dimension $d\geq 2$ there exist numbers $f'(d)$ and $g'(d)$ with the following property.
    Let $\F$ be a finite $d$-colored family of convex sets in $\R^d$, with color classes $\F_1,\dots,\F_d$, that satisfies $\CH(\F_1,\dots,\F_d)$. 
    Then one of the following statements holds:
    \begin{enumerate}
        \item there is a color class $\F_j$, for $j\in [d]$, that can be pierced by $f'(d)$ points, or
        \item the entire family $\F$ can be crossed by $g'(d)$ lines.
    \end{enumerate}
\end{theorem}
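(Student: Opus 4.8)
The plan is to prove the contrapositive of the dichotomy: assuming that \emph{no} color class $\F_j$ can be pierced by $f'(d)$ points, I would produce a transversal of $\F$ by $g'(d)$ lines. The first step is to record the precise consequence of the hypothesis $\CH(\F_1,\dots,\F_d)$. Fixing any rainbow selection $K_2\in\F_2,\dots,K_d\in\F_d$ from all but the first class, the convex set $C=\bigcap_{i=2}^d K_i$ is nonempty (by $\CH$ applied with any $K_1$) and is met by \emph{every} member of $\F_1$; the same holds after permuting the roles of the classes. Thus each class is a family of convex sets all stabbing a common convex ``core'', and the question becomes how spread out these cores are forced to be once piercing by few points fails.

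The structural engine is the contrapositive of the $(p,q)$- and Fractional Helly theorems (Theorems \ref{theorem:PQ} and \ref{theorem:Fractional}): if $\F_1$ is not pierceable by $f'(d)$ points then, for suitably chosen constants, it fails the $(p,q)$-property and hence contains an arbitrarily large Helly-independent subfamily, i.e. many sets no $d+1$ of which meet. In the base case $d=2$ this already closes the argument, because $\CH(\F_1,\F_2)$ is exactly pairwise cross-intersection: two disjoint sets $A_1,A_2\in\F_1$ are separated by a line $h$, and since every $B\in\F_2$ meets both $A_1$ and $A_2$, connectedness forces $B$ to cross $h$, so $\F_2$ is stabbed by the single line $h$. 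Running this for both classes, either each class is crossed by one line coming from the other's disjoint pair (so $g'(2)\le 2$), or one class is pairwise intersecting; this last case is where a genuine line transversal as in Figure \ref{fig:HellyOpt} can be needed and must be analyzed through the distribution of directions of the separating lines.

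For the inductive step I would project along a generic direction $\pi\colon\R^d\to\R^{d-1}$. Since projection preserves intersections, the projected classes $\pi(\F_1),\dots,\pi(\F_d)$ still satisfy $\CH$; but now $d=(d-1)+1$, so these form exactly a $\bigl((d-1)+1\bigr)$-colored family in $\R^{d-1}$ and Theorem \ref{theorem:main} applies in dimension $d-1$ (the induction being run jointly with that equivalent statement). Downstairs we obtain either an additional point-pierceable class or a bounded line transversal of $\pi(\F)$. The crucial asymmetry on pulling back is that a piercing point in $\R^{d-1}$ lifts to a vertical line in $\R^d$, exactly the transversal we want, whereas a transversal line in $\R^{d-1}$ lifts only to a vertical $2$-flat in $\R^d$.

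The main obstacle is precisely this gap: upgrading a bounded transversal by $2$-flats (and, if one iterates projections, by higher $k$-flats) into a bounded transversal by lines, or else exhibiting the point-pierceable class promised by alternative (1). This is where the slack of having only $d$ colors rather than $d+1$ must be spent, and where I expect the weak $\epsilon$-net machinery (Theorem \ref{Theorem:WeakPoints}) to enter: the stabbing cores $C=\bigcap_{i\ne j}K_i$ determine, via representative points, a point set whose heavy convex regions are pierceable by boundedly many points, and combined with the Helly-independent subfamilies from the structural engine one argues that the transversal flats may be taken with a common direction and concurrent through a bounded point set, so that they collapse to $g'(d)$ lines. If instead this collapse fails, the obstruction should localize the members of some class near a bounded set and render that class pierceable by $f'(d)$ points. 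Controlling the direction and concurrency of these flats uniformly across all classes, so that the two alternatives remain the only outcomes, is the delicate heart of the argument and is exactly what excludes configurations other than the parallel-hyperplanes example of Figure \ref{fig:HellyOpt}.
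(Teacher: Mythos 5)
Your proposal is a plan rather than a proof, and both of its load-bearing steps have genuine gaps. First, the base case $d=2$: you derive the line transversal of $\F_2$ from \emph{two disjoint sets} in $\F_1$, but failure of piercing by one point does not produce disjoint sets --- by Helly's theorem it only produces three sets $A_1,A_2,A_3$ with empty common intersection, and these may well be pairwise intersecting (e.g.\ a family of $n$ lines in general position is pairwise intersecting and needs roughly $n/2$ piercing points, yet contains no disjoint pair). You notice this case (``one class is pairwise intersecting'') but leave it to an unspecified analysis of ``the distribution of directions of the separating lines''. The paper's Lemma \ref{proposition:twocolors} closes exactly this hole: from $d+1$ sets of $\A$ with empty intersection one passes to $d+1$ halfspaces $H_i\supset A_i$ with empty intersection, and any $B$ meeting all the $A_i$ must cross one of the first $d$ bounding hyperplanes; no disjointness is needed.

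Second, the inductive step. After a generic projection, Theorem \ref{theorem:main} in $\R^{d-1}$ gives either a class of $\pi(\F)$ pierced by few points or all of $\pi(\F)$ crossed by few lines, and, as you yourself observe, neither conclusion lifts to an alternative of the theorem upstairs: a point-pierced class lifts to a \emph{line} transversal of that single class (not a point transversal of it, and not a transversal of all of $\F$), while a line transversal downstairs lifts to vertical $2$-flats. Collapsing those $2$-flats to lines, ``or else'' extracting a point-pierceable class, is the entire content of the theorem, and the mechanism you sketch (common direction plus concurrency, via Theorem \ref{Theorem:WeakPoints}) is not carried out; it is also aimed at the wrong invariant. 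What must be controlled when descending in dimension is not where the \emph{sets} are crossed but where their \emph{intersections} live: the paper iterates a ``Step-Down'' Lemma (Lemma \ref{Lemma:Iterate}) whose hypothesis is that the family of pairwise intersections $\I(\A,\B)$ is crossed by $m$ $k$-flats, and whose conclusion is that either $\A$ is pierced by $F(m,k,d)$ points or $\B$ --- which in the iteration is itself a family of intersections $\I(\F_{i+1},\dots,\F_d)$ --- is crossed by boundedly many $(k-1)$-flats. Proving that lemma requires the full Alon--Kleitman machinery (LP duality between fractional transversals and matchings, a fractional version of Lemma \ref{proposition:twocolors}, and the Alon--Kalai weak $\epsilon$-net for hyperplanes applied inside each flat $\Gamma_i$), none of which your projection-based recursion reconstructs; indeed, generic projection destroys precisely the localization of intersections inside low-dimensional flats that makes this machinery applicable.
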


Theorem \ref{theorem:main} immediately follows from Theorem \ref{theorem:main-d} by setting $f(d)=f'(d)$ and $g(d)=g'(d)+1$. For the other direction, by letting $\F_{d+1}=\{\reals^d\}$ we can set $f'(d)=f(d)$ and $g'(d)=g(d)$.

Notice that in the $d$-colored scenario of Theorem \ref{theorem:main-d} one can use Theorem \ref{theorem:CHT} to obtain \emph{one} color class $\F_i$ that can be crossed by a single line (through a generic projection of $\F_d$ to $\reals^{d-1}$). The main strength of Theorem \ref{theorem:main-d} is that it shows a complementary relation between transversals to multiple colors $\F_i$, for $i\in [d]$. This relation can be further generalized as follows.

\begin{theorem}\label{theorem:structure}
    For all $1\leq i\leq d$ there exist numbers $f(i,d)$ and $g(i,d)$ with the following property.
    Let $\F$ be a finite $(d+1)$-colored family of convex sets in $\R^d$ that satisfies $\CH(\F_1,\dots,\F_{d+1})$. Then there exist $k\in[d]$ and a re-labeling of the color classes $\F_1,\dots,\F_{d+1}$ of $\F$ so that
    \begin{enumerate}
        \item $\bigcup_{1\leq j\leq k} \F_j$ can be pierced by $f(k,d)$ points, and
        \item $\bigcup_{k< j\leq d+1} \F_j$ can be crossed by $g(k,d)$ $k$-flats.
    \end{enumerate}
\end{theorem}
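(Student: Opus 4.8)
The plan is to prove Theorem~\ref{theorem:structure} by induction on the dimension $d$, using Theorem~\ref{theorem:main-d} (equivalently Theorem~\ref{theorem:main}) as both the base case and the engine that drives each inductive step. The statement we want is a hierarchy: some prefix of color classes is pierced by few points, and the complementary classes are crossed by few $k$-flats, where $k$ equals the number of point-pierced classes. The key observation is that Theorem~\ref{theorem:main} already gives the case $k=1$ of a natural dichotomy: either we find a color class pierceable by $f(d)$ points (giving $k\ge 1$ with the first bullet), or the \emph{entire} family is crossed by $g(d)$ lines (which are $1$-flats, matching the second bullet with $k=1$). So the two extreme regimes of the desired statement are immediately available; the content of the theorem is in interpolating between them.

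First I would set up the induction so that after extracting $m$ color classes that are each pierceable by few points, I project the configuration along the directions already ``used up'' and recurse in a lower-dimensional space. Concretely, suppose Theorem~\ref{theorem:main} has handed us a class $\F_{j}$ pierced by $f(d)$ points; relabel it as $\F_1$. I would then consider the remaining classes and iterate the dichotomy. The natural mechanism is that once a class is pierced by a bounded point set $S$, one can quotient by (or project away) a direction and reduce the effective dimension in which the remaining classes must be crossed by flats: a $(k-1)$-flat transversal in a projected copy lifts to a $k$-flat transversal upstairs. I would carry out the recursion by repeatedly invoking the two-outcome alternative of Theorem~\ref{theorem:main}: as long as the current (projected) subfamily yields another cheaply-pierceable class, I absorb it into the point-pierced prefix and increment $k$; the moment Theorem~\ref{theorem:main} instead returns a line transversal in the projected space, I lift those lines to $k$-flats in $\reals^d$ and stop. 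This loop must terminate after at most $d$ steps since each extraction consumes a color, so $k\in[d]$ as required, and the accumulated piercing and crossing bounds are obtained by composing the functions $f(\cdot),g(\cdot)$ along the at-most-$d$ levels of recursion, which defines $f(k,d)$ and $g(k,d)$.

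The hard part will be making the projection-and-lifting step rigorous while preserving the colorful Helly hypothesis. Two technical obstacles stand out. The first is that projecting convex sets to a lower-dimensional space need not preserve $\CH$ in the naive way: the images of a rainbow selection still intersect, but I must check that a generic projection along the directions spanned by the point transversal of the extracted class keeps the remaining color classes in ``general position'' so that their projected family again satisfies the hypotheses of Theorem~\ref{theorem:main-d} in dimension $d-1$. This is the crux, and I expect it to require a careful choice of projection direction (generic with respect to finitely many degenerate configurations) together with an argument that the bounded point set piercing $\F_1$ controls how the first coordinate can be ``collapsed.'' The second obstacle is the lifting direction: a $(k-1)$-flat crossing a projected set must be promoted to a $k$-flat crossing the original set, and I must verify that the product of a $(k-1)$-flat transversal in the quotient with the collapsed direction yields a genuine $k$-flat hitting every set of the complementary classes, not merely their projections.

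Finally, I would assemble the bounds. Writing the recursion as a sequence of at most $d$ applications of Theorem~\ref{theorem:main}, the piercing bound for the prefix $\bigcup_{1\le j\le k}\F_j$ is the sum of the per-level point bounds, while the crossing bound $g(k,d)$ for $\bigcup_{k<j\le d+1}\F_j$ comes from the final line-transversal outcome in dimension $d-k+1$ lifted to $k$-flats. One should double-check the two endpoint cases: $k=d+1$ cannot occur (we stop at $k\le d$ because in dimension $1$ the alternative is trivial), and $k=1$ recovers Theorem~\ref{theorem:main} verbatim with line ($1$-flat) transversals. The cleanest exposition is therefore to state an explicit recursive definition $f(k,d)$, $g(k,d)$ in terms of $f',g'$ from Theorem~\ref{theorem:main-d} and then verify the two bullets hold at the stopping time of the loop.
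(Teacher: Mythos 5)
Your reduction has a genuine gap, and it is not the one you flagged. The step you call the crux --- preserving the Colorful Helly hypothesis under projection --- is in fact trivial: linear images of convex sets are convex, and a common point of a rainbow tuple projects to a common point of the images, so $\CH$ survives \emph{any} projection, generic or not. The real obstruction is the converse direction: \emph{point transversals do not lift through projections}. If $\pi\colon\reals^d\to\reals^{d-1}$ is your projection and some class $\F_j$ has $\pi(\F_j)$ pierced by $N$ points, all you may conclude upstairs is that $\F_j$ is crossed by the $N$ fiber lines $\pi^{-1}(p)$; it need not be pierceable by any bounded number of points (take $\F_j$ to be $n$ pairwise disjoint segments lying on one line parallel to the projection direction: they all project to a single point, yet $n$ points are needed to pierce them). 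Consequently your loop invariant breaks at the third extraction. Only the classes produced by the first, unprojected application of Theorem \ref{theorem:main} are genuinely point-pierced in $\reals^d$; every later ``cheaply-pierceable'' class, found after $j$ projections, is merely crossed by boundedly many $j$-flats. So if the recursion stops after $m\geq 2$ projections you hold: two point-pierced classes, one line-crossed class, one $2$-flat-crossed class, \dots, and a tail crossed by $(m+1)$-flats. This graded structure cannot be reassembled into the conclusion of Theorem \ref{theorem:structure}, which demands a \emph{single} $k$ with exactly $k$ classes pierced by points and all the rest crossed by $k$-flats: choosing $k=2$ fails because the tail needs flats of dimension larger than $2$, while choosing $k>2$ fails because you cannot certify more than two point-pierced classes. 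Absorbing a projected-pierceable class ``into the point-pierced prefix'', as your outline does, is exactly the illegitimate step.

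This is why the paper never projects. Its proof of Theorem \ref{theorem:structure} reruns the iterations of the Step-Down Lemma (Lemma \ref{Lemma:Iterate}) from the proof of Theorem \ref{theorem:main-d} inside $\reals^d$ itself, where each iteration's dichotomy yields genuine point-piercing \emph{in $\reals^d$} versus flat-crossing in $\reals^d$; the ``dimension'' that drops is that of the transversal flats, not of the ambient space. Concretely, the paper lets $l$ be the largest number of color classes that cannot be pierced by the relevant $F(\cdot,\cdot,\cdot)$ thresholds; the complementary $k=d-l+1$ classes (including the one supplied by Theorem \ref{theorem:CHT}) form the point-pierced group, and for each of the $l$ stubborn classes one runs the $l-1$ step-down iterations with that class placed last, concluding it is crossed by boundedly many $(d-l+1)$-flats; taking the union over the $l$ orderings gives the $k$-flat transversal of the second group. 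To salvage your outline you would need a mechanism that detects point-pierceability of a third, fourth, \dots class in the \emph{ambient} space; that is precisely what Lemma \ref{Lemma:Iterate} provides and what no projection argument can.
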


In other words, Theorem \ref{theorem:structure} characterizes the families of sets with the Colorful Helly property up to their transversal structure by flats.

This paper is organized as follows. In Section \ref{Sec:Main} we prove our main technical results -- Theorems \ref{theorem:main-d} and \ref{theorem:structure}. To this end, we establish a series of claims of independent interest that concern \emph{$2$-colored families} of convex sets. Despite the apparent weakness of the $2$-colored hypothesis in dimension higher than $2$, these results provide  all the essential ingredients for our analysis.
Theorem \ref{theorem:main-d} is finally established by repeatedly invoking a so called ``Step-Down'' Lemma which provides a crucial relation between $k$-flat and $(k-1)$-flat transversals of families with the Colorful Helly property, for all $1\leq k\leq d-1$.

The proof of the ``Step-Down'' Lemma is deferred to Section \ref{Sec:StepDown}, and it is based on a careful adaptation of the machinery of Alon and Kleitman \cite{Alon1992} and Alon and Kalai \cite{Alon1995}, to families of convex sets whose intersection graph is complete bi-partite.

Section \ref{Sec:Example} is devoted to constructing a lower bound for $g'$ in Theorem \ref{theorem:main-d}. Our example implies that, independently of the value given to $f'(d)$, $g'(d)\ge \lceil \frac {d+1}2\rceil$.

Finally, in Section \ref{Sec:Discus} we conclude the paper with several intriguing questions for future study.

\section{Proofs of Theorems \ref{theorem:main-d} and \ref{theorem:structure}}\label{Sec:Main}

A crucial ingredient of our proof is the following claim which concerns $2$-colored families.

\begin{lemma}\label{proposition:twocolors}
    Let $\A$ and $\B$ be families of convex sets in $\R^d$ so that $A\cap B\neq\emptyset$ for every $A\in \A$ and $B\in \B$. Then either
    \begin{enumerate}
        \item[(1)] $\bigcap \A\neq\emptyset$, or
        \item[(2)] $\B$ can be crossed by $d$ hyperplanes.
    \end{enumerate}
\end{lemma}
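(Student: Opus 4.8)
The plan is to argue by contraposition on the dichotomy: assuming $\bigcap\A=\emptyset$, I will exhibit $d$ hyperplanes crossing $\B$. Since $\A$ and $\B$ cross-intersect, every $B\in\B$ meets every member of $\A$, so it suffices to prove the following self-contained (and slightly stronger) statement about a single family. For a finite family $\G$ of convex sets with $\bigcap\G=\emptyset$, let $m$ be the size of a smallest subfamily of $\G$ with empty intersection. Then there are $m-1$ hyperplanes that meet every convex set which intersects all members of $\G$. Helly's Theorem gives $m\le d+1$, hence $m-1\le d$, and the Lemma follows by taking $\G=\A$ (every $B$ is such a convex set). I would prove this by induction on $m$, working throughout with compact convex sets, which is harmless here (and note that compactness is preserved under the operation used below).

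For the base case $m=1$ the family $\G$ contains the empty set, no convex set meets all of $\G$, and the statement holds vacuously with $0$ hyperplanes. For the inductive step, fix a smallest empty-intersection subfamily $A_0,\dots,A_{m-1}\subseteq\G$. By minimality the set $C:=\bigcap_{i=1}^{m-1}A_i$ is non-empty, while $C\cap A_0=\bigcap_{i=0}^{m-1}A_i=\emptyset$. Thus $C$ and $A_0$ are disjoint, non-empty, compact convex sets, so I can strictly separate them by a hyperplane $M$, with $A_0$ in the open side $M^-$ and $C$ in the open side $M^+$. I take $M$ as the first of my hyperplanes.

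Now let $B$ be any convex set meeting all of $\G$. It meets $A_0\subseteq M^-$, so either $B\cap M\neq\emptyset$, in which case $M$ already crosses $B$, or $B\cap M=\emptyset$ and then $B\subseteq M^-$ (it has a point of $A_0$ strictly in $M^-$). The crux is to dispatch the sets of the second type by \emph{lowering the induction parameter}. Intersecting the witness sets with the closed halfspace $\overline{M^-}$, put $A_i':=A_i\cap\overline{M^-}$; each such $B$ still meets every $A_i'$, since $B\cap A_i\subseteq M^-$ is non-empty. The key observation is that
\[
\bigcap_{i=1}^{m-1}A_i'=C\cap\overline{M^-}=\emptyset,
\]
because $C$ lies strictly in $M^+$. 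Hence the $m-1$ sets $A_1',\dots,A_{m-1}'$ already have empty intersection, so the family they form has a smallest empty-intersection subfamily of size at most $m-1$; by the induction hypothesis, $m-2$ hyperplanes cross every convex set meeting all of $A_1',\dots,A_{m-1}'$, in particular every $B$ of the second type. Together with $M$ this gives $m-1$ hyperplanes, completing the induction.

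The step I expect to be the main obstacle — and the reason the argument is organised this way — is choosing the right quantity to induct on. The tempting route is to \emph{reduce the ambient dimension} by projecting $\A$ and $\B$ along a common direction (projection preserves cross-intersection); but the projected copy of $\A$ may retain a non-empty intersection, i.e.\ $\A$ can have a transversal line in \emph{every} direction even though $\bigcap\A=\emptyset$ (this already occurs for three pairwise-intersecting sets in the plane), so a dimension induction stalls in this ``stuck'' case. The insight that rescues the proof is that slicing by the halfspace $\overline{M^-}$ does not change the dimension but strictly decreases the combinatorial complexity $m$: the very set $C$ that certified non-emptiness after deleting $A_0$ is pushed entirely to the far side of $M$ and vanishes. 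The only routine technical point requiring care is the legitimacy of strict separation of $C$ and $A_0$, which is why I pass to compact sets; strictness is exactly what forces $C\cap\overline{M^-}=\emptyset$, and without it the parameter $m$ need not drop.
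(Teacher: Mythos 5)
Your proof is correct, and it takes a genuinely different route from the paper's. The paper argues in one shot: if $\bigcap\A=\emptyset$, Helly's theorem yields $A_1,\dots,A_{d+1}\in\A$ with empty intersection; a cited standard fact then supplies halfspaces $H_i\supseteq A_i$ with $\bigcap_{i=1}^{d+1}H_i=\emptyset$, and an arrangement argument finishes: any $B\in\B$ avoiding the bounding hyperplanes $\Pi_1,\dots,\Pi_d$ lies in a single open cell, which (since $B$ meets each $A_i\subseteq H_i$) must be the cell $\bigcap_{i=1}^{d}H_i$, so $B$ cannot also reach $A_{d+1}\subseteq H_{d+1}$. You replace the halfspace-enlargement black box with an induction on the size $m$ of a smallest non-intersecting subfamily: one strictly separating hyperplane $M$ is extracted, and slicing the witness sets by $\overline{M^-}$ kills the set $C$ certifying minimality, so the parameter strictly drops. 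Your route is more self-contained (it needs only Helly and strict separation of two disjoint compact convex sets, not the cited enlargement lemma) and yields the finer bound of $m-1\le d$ hyperplanes; the paper's route is shorter modulo the citation and produces all $d$ hyperplanes at once as boundaries of the separating halfspaces.

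One step you should make explicit is why restricting to compact sets is ``harmless.'' Your single-family statement for compact $\G$ does not formally imply it for arbitrary $\G$: shrinking the members of $\G$ weakens the hypothesis ``meets all members of $\G$,'' so the quantifier over $B$ shifts. The reduction has to be tailored to $\B$: for finite families, pick $p_{A,B}\in A\cap B$ for every pair and replace each $A\in\A$ by the polytope $\conv\{p_{A,B}\mid B\in\B\}$; this keeps $\bigcap\A$ empty and keeps every $B\in\B$ meeting every replaced set. (Alternatively, drop compactness altogether: two disjoint convex sets can always be separated, possibly not strictly, and slicing by the open halfspace $M^-$ instead of $\overline{M^-}$ still forces the parameter to drop, since $C\cap M^-=\emptyset$, while your second-type sets $B\subseteq M^-$ still meet each $A_i\cap M^-$.) Either patch is routine, so this is a presentational gap rather than a mathematical one --- and the paper shares it, since the enlargement-to-halfspaces fact it cites also fails for unbounded sets: in the plane, $\{(x,y)\mid y\geq e^x\}$ and $\{(x,y)\mid y\leq 0\}$ are disjoint closed convex sets admitting no disjoint halfspace enlargements.
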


One can establish Theorem \ref{theorem:main-d} in dimension $d=2$ (with $f'(2)=1$ and $g'(2)\leq 4$) by applying Lemma \ref{proposition:twocolors} twice.
The weaker transversal guarantee of Lemma \ref{proposition:twocolors} in higher dimension $d\geq 3$ (namely, crossing by few hyperplanes instead of few lines) is due to the weaker, $2$-colored hypothesis.

\begin{proof}

    Assume that \emph{(1)} does not hold. Then by Helly's theorem there are convex sets $A_i\in\A$, for $1\leq i\leq d+1$, with empty intersection. By a standard argument (see e.g. \cite[Theorem 7.1]{Barany2014t}), there exist $d+1$ halfspaces $H_i\supseteq A_i$ with empty intersection. Let $\Pi_i$ be the bounding hyperplane of $H_i$. We claim that the union of the first $d$ hyperplanes $\Pi_i$ (for $i=1,\dots,d$) must meet all the sets from $B$. See Figure \ref{fig:Separate} for an illustration in $\reals^2$.
\begin{figure}
    \begin{center}
        \includegraphics[width=0.5\textwidth]{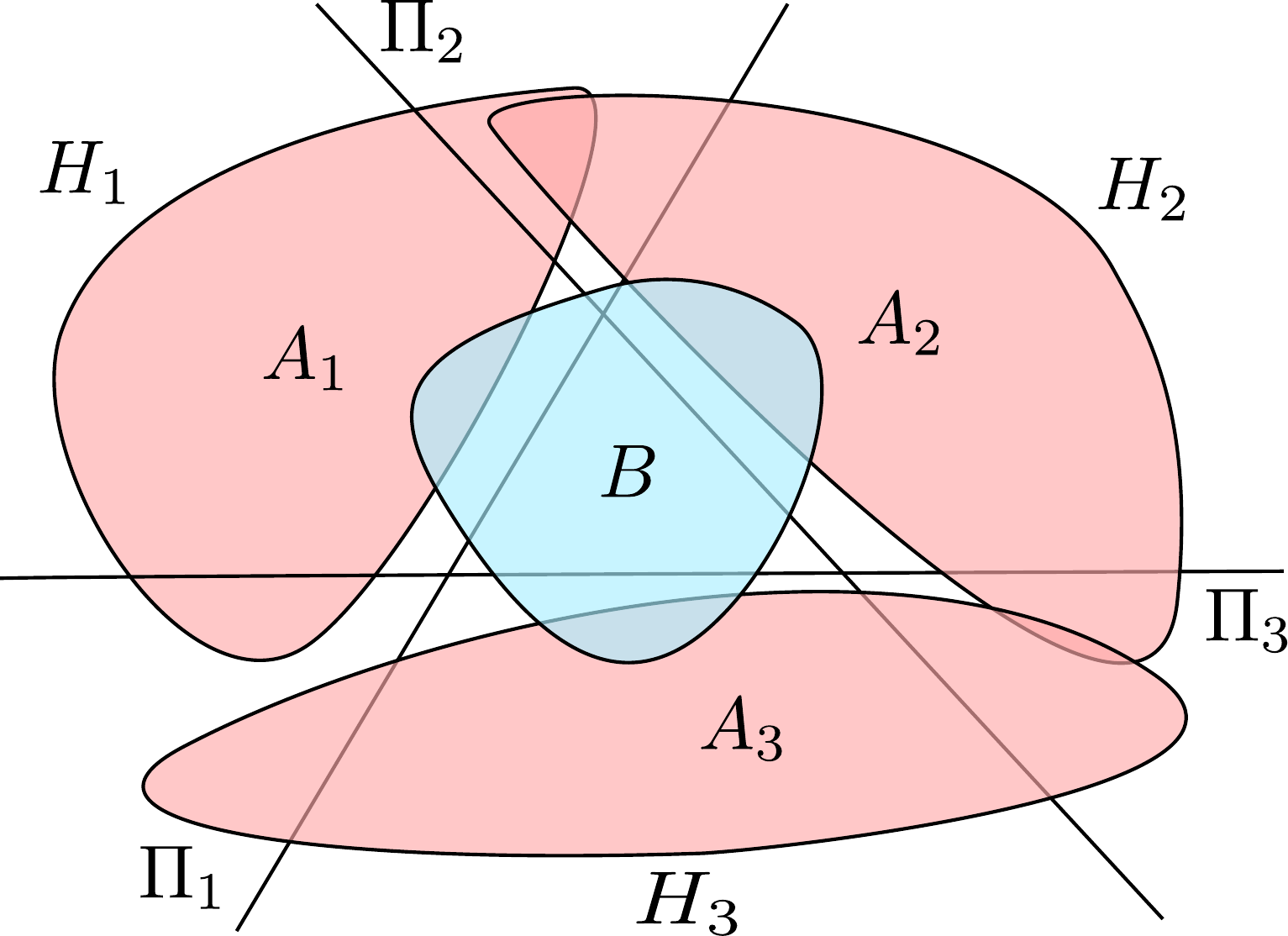}
				\caption{\sf \small Proof of Lemma \ref{proposition:twocolors}. We have $A_1,A_2,A_3\in \A$ and $B\in \B$. Since $A_1\cap A_2\cap A_3=\emptyset$, we have have halfspaces $H_i\supset A_i$, for $1\leq i\leq 3$, so that $\bigcap_{i=1}^n H_i=\emptyset$. Hence, the set $B\in \B$ must cross at least one of the respective bounding lines $\Pi_1$ and $\Pi_2$ of $H_1$ and $H_2$ to meet the sets $A_1$, $A_2$ and $A_3$. }
        \label{fig:Separate}
    \end{center}
\end{figure}
    
    Indeed, consider the arrangement of $\Pi_1, \dots, \Pi_d$ and suppose that a set $B\in \B$ does not intersect any of the hyperplanes $\Pi_i$. Then $B$ must be completely contained in an open cell $\sigma$ of their arrangement. Since $B$ intersects each of the sets $A_i$, for $1\leq i\leq d$, we obtain $\sigma=\bigcap_{i=1}^d H_i$. However, then $B$ cannot intersect $A_{d+1}\subset H_{d+1}$, since $H_1\cap\dots\cap H_{d+1}=\emptyset$. This contradiction implies \emph{(2)}.
\end{proof}

Both Theorems \ref{theorem:main-d} and \ref{theorem:structure} are established by iterating the following more refined variant of Lemma \ref{proposition:twocolors}.

\begin{lemma}[``Step-Down'' Lemma]\label{Lemma:Iterate}
    For any $1\le k\le d$ and $m\ge 1$ there exist numbers $F(m,k,d)$ and $G(m,k,d)$ with the following property.
    
    Let $\A$ and $\B$ be finite families of convex sets in $\reals^d$ so that the family 
    $$\I(\A,\B):=\{A\cap B\mid A\in \A,B\in \B\}$$
    can be crossed by $m$ $k$-flats. Then one of the following conditions is satisfied:
    \begin{enumerate}
        \item $\A$ can be pierced by $F(m,k,d)$ points, or
        \item $\B$ can be crossed by $G(m,k,d)$ $(k-1)$-flats.
    \end{enumerate}
\end{lemma}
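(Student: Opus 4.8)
The plan is to extract from the statement a purely geometric core governed by Lemma~\ref{proposition:twocolors}, and to surround it with the fractional-Helly and weak $\epsilon$-net machinery of Alon--Kleitman \cite{Alon1992} and Alon--Kalai \cite{Alon1995}. I first observe that the hypothesis already forces a complete bipartite intersection pattern: were some $A\cap B$ empty, then $\emptyset\in\I(\A,\B)$ would have to be crossed, which is impossible, so $A\cap B\neq\emptyset$ for all $A\in\A$, $B\in\B$. Two regimes are then immediate. If $k=d$, the only $d$-flat is $\reals^d$ and the statement reduces verbatim to Lemma~\ref{proposition:twocolors}. If $m=1$, every $A\cap B$ meets the single $k$-flat $\ell$, so inside $\ell\cong\reals^{k}$ the restricted families $\{A\cap\ell\}$ and $\{B\cap\ell\}$ are again completely bipartite; applying Lemma~\ref{proposition:twocolors} in $\reals^{k}$ gives either one point piercing all of $\A$ or $k$ hyperplanes of $\ell$ (that is, $k$ $(k-1)$-flats) crossing $\B$. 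Thus $F(1,k,d)=1$ and $G(1,k,d)=k$, and the real content lies in the range $2\le k\le d-1$, $m\ge 2$.

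The mechanism I would localize is the one inside Lemma~\ref{proposition:twocolors}: if $\A$ is not pierced by a point, Helly yields $A_1,\dots,A_{d+1}$ with empty intersection and separating halfspaces whose $d$ bounding hyperplanes cross every $B$ (since each $B$ meets every $A_i$). To obtain $(k-1)$-flats rather than hyperplanes, I would run the same separation \emph{inside} a single $k$-flat $\ell_s$, where it produces $k$ hyperplanes of $\ell_s$, i.e.\ $(k-1)$-flats; the catch is that these cross only those $B$ whose contacts with the chosen sets all lie on $\ell_s$. Pigeonhole over the $m$ flats is what aligns enough pairs: fixing one $A\in\A$ and any $p$ sets of $\B$, at least $p/m$ of the intersections $A\cap B$ share a flat $\ell_s$, and any $k$ of the corresponding contact points lie on a common hyperplane of $\ell_s$, which crosses the associated members of $\B$. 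This already shows that $\B$ satisfies a $(p,k)$-property for $(k-1)$-flat transversals with $p=m(k-1)+1$ --- but, and this is the whole difficulty, there is no $(p,q)$-theorem for flat transversals in the intermediate range, as witnessed by Alon et al.\ \cite{Alon2002}, so this property cannot by itself bound the transversal number.

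The bipartite coupling to $\A$ is what I would use to bypass the absence of a $(p,q)$-theorem for flats. The target is a one-sided \emph{fractional step-down}: a constant $\beta=\beta(m,k,d)>0$ such that \emph{either} some point pierces at least $\beta|\A|$ members of $\A$, \emph{or} some single $(k-1)$-flat crosses at least $\beta|\B|$ members of $\B$. To prove it I would combine the localized separation with the Fractional Helly Theorem (Theorem~\ref{theorem:Fractional}): informally, if many $(d+1)$-tuples of $\A$ already intersect then Theorem~\ref{theorem:Fractional} pierces a positive fraction of $\A$ with one point, whereas an abundance of empty-intersection tuples provides, after pigeonholing their contacts onto a common $\ell_s$, the separating halfspaces that push a positive fraction of $\B$ onto a bounded number of $(k-1)$-flats.

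Finally I would globalize each branch. On the $\A$-side the step from ``a point pierces $\beta|\A|$'' to ``$F(m,k,d)$ points pierce all of $\A$'' is exactly the Alon--Kleitman argument, using the weak $\epsilon$-net theorem for points (Theorem~\ref{Theorem:WeakPoints}) and the point $(p,q)$-theorem (Theorem~\ref{theorem:PQ}). On the $\B$-side no such net theorem is available, so I cannot simply iterate the fractional dichotomy --- that would cost $O(\log|\B|)$ flats. Instead the fractional step-down must be arranged so that each time the $\A$-branch is declined one peels off a $\beta$-fraction of $\B$ with $O_{m,k,d}(1)$ flats while the residual family stays completely bipartite against all of $\A$, and one shows this terminates after a bounded number $G(m,k,d)$ of flats. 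I expect this synchronization --- keeping the point-piercing of $\A$ and the flat-crossing of $\B$ compatible uniformly across the $m$ flats, and extracting a genuinely constant flat-transversal from a family whose own transversal theory is ill-behaved --- to be the principal obstacle, and it is precisely the adaptation of the Alon--Kalai fractional-Helly argument to complete bipartite intersection graphs that Section~\ref{Sec:StepDown} is devoted to.
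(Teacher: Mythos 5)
Your reduction to a fractional dichotomy is sound and matches the paper's strategy: the paper's Lemma~\ref{theorem:fractwo} is exactly your ``fractional step-down'' (either one point pierces a $\gamma$-fraction of $\A$, or one hyperplane --- applied inside a $k$-flat, hence a $(k-1)$-flat --- crosses a $\lambda$-fraction of $\B$), and it is proved essentially as you sketch: split the special pairs according to whether the $(d+1)$-tuple from $\A$ intersects, apply Fractional Helly (Theorem~\ref{theorem:Fractional}) to one part, and Lemma~\ref{proposition:twocolors} plus pigeonhole to the other. Your $\A$-side globalization via the Alon--Kleitman machinery is also the paper's.

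The $\B$-side, however, is a genuine gap, and it is the crux of the lemma. Your peeling scheme does not terminate in a bounded number of rounds: each round in which the $\A$-branch is declined removes only a $\beta$-fraction of the \emph{current residual} family, so greedy iteration costs $\Theta(\log|\B|)$ flats, and you offer no mechanism to do better --- you only assert that ``one shows this terminates after a bounded number of flats.'' The missing idea is that a weak $\epsilon$-net theorem for the $\B$-side \emph{is} available once the candidate $(k-1)$-flats are restricted to lie inside the $m$ given $k$-flats $\Gamma_1,\dots,\Gamma_m$: any such flat is a hyperplane of some $\Gamma_i\cong\reals^k$, so the Alon--Kalai weak $\epsilon$-net for hyperplanes (Lemma~\ref{Lemma:WeakPlanes}) applies within each $\Gamma_i$ separately, at a total cost of $m\cdot W_{hpl}\left(\lambda/m,\,k-1,\,k\right)$ flats. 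This is precisely why the paper defines the hypergraph $\H_{k-1}(\B)$ with vertex set consisting only of $(k-1)$-flats contained in $\bigcup_{i=1}^m\Gamma_i$. With that restriction the argument is one-shot rather than iterative: the fractional dichotomy --- which must be proved for \emph{multisets} of $\A$ and $\B$ exactly for this purpose --- bounds the fractional matching number of $\H_0(\A)$ or of $\H_{k-1}(\B)$ (Claim~\ref{Claim:Packing}); LP duality (Theorem~\ref{Theorem:Dual}) converts this into a bounded fractional transversal number; and the weak $\epsilon$-nets (points for $\A$, hyperplanes inside each $\Gamma_i$ for $\B$) round the fractional transversal to an integral one of size independent of $|\A|,|\B|$. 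Your own overview cites the Alon--Kalai machinery, but in the final step you declare it unavailable; localizing it to the $m$ $k$-flats is exactly the observation that closes the argument.
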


Notice that the hypothesis of Lemma \ref{Lemma:Iterate} implies, in particular, that every two sets $A\in \A,B\in \B$ intersect.
Thus, Lemma \ref{proposition:twocolors} deals with the special case of Lemma \ref{Lemma:Iterate} in which $k=d$, yielding $F(1,d,d)=1$ and $G(1,d,d)\leq d$.

We defer the somewhat complex proof of Lemma \ref{Lemma:Iterate} to Section \ref{Sec:StepDown}. It combines the standard duality relation between transversal and packing numbers of hypergraphs with a ``hyperplane'' variant of Theorem \ref{Theorem:WeakPoints}, due to Alon and Kalai \cite{Alon1995}, in which we are given 
a collection of hyperplanes $H$ and seek to find a small hyperplane transveral to all the convex sets that are crossed by a fixed fraction of the hyperplanes of $H$.

We are now ready to establish Theorem \ref{theorem:main-d}.

\begin{proof}[Proof of Theorem \ref{theorem:main-d}.]
    Let $\F$ be a $d$-colored family that satisfies $\CH(\F_1,\dots,\F_d)$ and does not satisfy conclusion $1$.
    Since the labeling of the color classes $\F_1,\dots,\F_d$ is arbitrary, it suffices to show that the last family $\F_d$ can be crossed by few lines.
    
    The underlying idea of our analysis is as follows.
    We apply the ``Step-Down'' Lemma \ref{Lemma:Iterate} $d-1$ times. In the $i$-th iteration (for $1\leq i\leq d-1$) we deal with a $(d-i+1)$-colored and \emph{essentially $(d-i+1)$-dimensional} scenario in which the family of all the $(d-i+1)$-wise intersections
    $$\I(\F_i,\dots,\F_d):=\bigg\{\bigcap_{j=i}^d A_j\mid A_j\in \F_j\bigg\}$$
    is ``captured'' by only $M=M(i,d)$ copies of $\reals^{d-i+1}$ within $\reals^d$. Unless $\F_i$ can be pierced by $F(M,i,d)$ points, the ``Step-Down'' Lemma can be used to further reduce the intrinsic ``transversal dimension'' of the remaining sets $\F_{i+1},\dots,\F_d$ to $d-i$.
    
    For reasons that will become evident shortly, we set
    $$M(i,d):=
    \begin{cases}
        1 & \text{for }i=1,\\
        d & \text{for }i=2,\\
        G(M(i-1,d),d-i+2,d) & \text{for }3\leq i\leq d-1.
    \end{cases}$$
    
    For $i=1$, the condition that $\I(\F_1,\dots,\F_d)$ is crossed by $\reals^d$ is equivalent to the $\CH(\F_1,\dots,\F_d)$ hypothesis. Notice that the families $\A:=\F_1$ and $\B:=\I(\F_2,\dots,\F_{d})$ satisfy the hypothesis of Lemma \ref{proposition:twocolors}. Therefore, unless $\F_1$ can be pierced by a single point, the family $\I(\F_2,\dots,\F_d)$ can be crossed by $M(2,d)=d$ hyperplanes.
    
    Let us now fix $2\leq i\leq d-1$ and assume that $\I(\F_i,\dots,\F_d)$ can be crossed by $M=M(i,d)$ $(d-i+1)$-flats. Note that the families $\A:=\F_i$ and $\B:=\I(\F_{i+1},\dots,\F_d)$ satisfy the $2$-colored hypothesis of Lemma \ref{Lemma:Iterate}. Therefore, given that $\F_i$ cannot be pierced by $F(M,d-i+1,d)$ points, the other family $\I(\F_{i+1},\dots,\F_d)$ can be crossed by $M(i+1,d)=G(M,d-i+1,d)$ $(d-i)$-flats.
    
    Assuming neither of the families $\F_i$, for $1\leq i\leq d-1$, can be pierced by $F(M(i,d),d-i+1,d)$ points, by the end of the $(d-1)$-st iteration we can cross the last color class $\F_d$ by $G(M(d-1,d),2,d)$ lines.
    
    This proves Theorem \ref{theorem:main-d} with
    \begin{align*}
        f'(d)&=\max\{F(M(i,d),d-i+1,d)\mid 1\leq i\leq d-1\}, \text{ and}\\
        g'(d)&=d\cdot G(M(d-1,d),2,d).
    \end{align*}
\end{proof}

\begin{remark} In the proof of Theorem \ref{theorem:main-d}, the value of $g'(d)$ can be further improved to
    $$g'(d)=(d-1)\cdot G(M(d-1,d),2,d)+1$$
    by observing that at least one of the families $\F_1,\dots,\F_d$ can be crossed by a \emph{single} line.
    To this end, we project $\F$ in a generic direction $\vec{\nu}$ and apply Theorem \ref{theorem:CHT} to the resulting $d$-colored family $\F(\vec{\nu})=\F_1(\vec{\nu})\cup\dots \cup \F_d(\vec{\nu})$ within $\reals^{d-1}$. This yields an intersecting color class $\F_i(\vec{\nu})$ within $\reals^{d-1}$ and, therefore, a $\vec{\nu}$-parallel line which crosses the respective color class $\F_i$.
\end{remark}

\begin{proof}[Proof of Theorem \ref{theorem:structure}]
The Theorem is obviously true for $d=1$ (with $f(1,1)=1$, $g(1,1)=1$).
    Assume with no loss of generality that the last color class $\F_{d+1}$ can be pierced by a point (in accordance with Theorem \ref{theorem:CHT}).
    We adopt the notation of the previous proof while dealing with the remaining color classes $\F_1,\dots,\F_d$.
    
    Let $l$ be the size of the largest sequence $j_1,j_2\dots,j_l$ so that no class $\F_{j_i}$ can be pierced by $F(M(l,d),d-l+1,d)$ points.
    Let $\F'$ be the relabeling of $\F$ whose first
    $l$ color classes satisfy $\F'_{i}=\F_{j_i}$, for $1\leq i\leq l$. By following the first $l-1$ iterations of the proof of Theorem \ref{theorem:main-d}, we obtain that $\F'_l=\F_{j_l}$ can be crossed by $G(M(l-1,d),d-l,d)$ $(d-l+1)$-flats. By reordering of $j_1,\dots,j_l$, this establishes the claim of Theorem \ref{theorem:structure} for $\F$ with
    \begin{align*}
        k&=d-l+1,\\
        f(k,d)&=k\cdot F(M(d-k+1,d),k,d),\text{ and}\\
        g(k,d)&=(d-k+1)\cdot G(M(d-k,d),k+1,d).\qedhere
    \end{align*}
\end{proof}

\section{Proof of the ``Step-Down'' Lemma}\label{Sec:StepDown}

We develop a bi-partite variant of the machinery that was used by Alon and Kleitman \cite{Alon1992} to establish the $(p,q)$-Conjecture (Theorem \ref{theorem:PQ}). This method was extended by Alon and Kalai \cite{Alon1995} to obtain an analogous result for hyperplane transversals.

\subsection{From piercing to packing numbers}

The crucial ingredient of Alon-Kleitman approach was a duality relation between transversal (or piercing), and packing (or matching) numbers of hypergraphs.

\begin{definition}
    Let $\H=(\V,\E)$ be a hypergraph, where $\V$ is a finite set of elements and $\E$ is a family of subsets of $\V$. The elements of $\V$ are called \emph{vertices}, and the sets of $\E$ are called \emph{edges}.
\end{definition}

A subset $A\subset \V$ is a \emph{transversal} for $\H$ if it intersects every edge $S\in \E$ (i.e., $A\cap S\neq \emptyset$ for each $S\in \E$).
The \emph{transversal number} $\tau(\H)$ of $\H$ is the size $|A|$ of the smallest such transversal $A$.

A non-negative function $f:\V\rightarrow \reals$ is a \emph{fractional transversal} for $\H$ if it satisfies $\sum_{x\in S}f(x)\geq 1$ for every edge $S\in \E$. The \emph{fractional transversal number} $\tau^*(\H)$ of $\H$ is the total ``weight'' $\sum_{x\in \V}f(x)$ of the ``lightest'' fractional transversal $f$ of $\H$ (that is, it is the smallest possible value $\sum_{x\in \V}f(x)$ that can be attained by a fractional tranfsversal $f$).

A subset of edges $\E'\subseteq \E$ is called a \emph{$b$-matching} (or \emph{$b$-packing}) for $\H$ if every vertex $x\in \V$ belongs to at most $b$ edges of $\E'$. The \emph{$b$-matching number} $\nu_b(\H)$ of $\H$ is the size $|\E'|$ of the largest such $b$-matching $\E'$.

A non-negative function $g:\E\rightarrow \reals$
is a \emph{fractional matching} for $\H$ if it satisfies
$$\sum_{S\in \E: x\in S}g(S)\leq 1$$
for every $x\in \V$. The \emph{fractional matching number $\nu^*(\H)$} of $\H$ is the total ``weight'' $\sum_{S\in \E}g(S)$ of the ``heaviest'' fractional matching $g$ of $\H$ (that is, it is the largest possible value $\sum_{S\in \E}g(S)$ that can be attained by a fractional matching $g$).

A standard use of Linear Programming duality \cite{Alon1992,Alon1995,Alon2002} yields the following relation between transversal and matching numbers of $\H$.

\begin{theorem}\label{Theorem:Dual}
    We have
    $$\nu_b(\H)/b\leq \nu^*(\H)=\tau^*(\H)\leq \tau(\H)$$
    for every hypergraph $\H$ and $b\geq 1$.
\end{theorem}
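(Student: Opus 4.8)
The final statement to prove is Theorem \ref{Theorem:Dual}, which asserts the chain of inequalities $\nu_b(\H)/b \leq \nu^*(\H) = \tau^*(\H) \leq \tau(\H)$ for every hypergraph $\H$ and every $b \geq 1$.

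The plan is to establish this chain one link at a time, using Linear Programming duality as the central tool. First I would observe that $\tau(\H) \geq \tau^*(\H)$: given any (integral) transversal $A$, its indicator function $f = \mathbf{1}_A$ is a feasible fractional transversal of total weight $|A|$, so the optimum of the fractional relaxation can only be smaller. Symmetrically, $\nu_b(\H)/b \leq \nu^*(\H)$: given a $b$-matching $\E'$, assign each of its edges the weight $1/b$ and every other edge weight $0$; the $b$-matching condition says each vertex lies in at most $b$ edges of $\E'$, so the constraint $\sum_{S \ni x} g(S) \leq b \cdot (1/b) = 1$ holds at every vertex, making this a feasible fractional matching of total weight $|\E'|/b$. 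Hence $\nu_b(\H)/b \leq \nu^*(\H)$.

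The crux is the middle equality $\nu^*(\H) = \tau^*(\H)$, which I would obtain by writing out the two fractional quantities as a primal-dual pair of linear programs and invoking strong LP duality. The fractional transversal number is the value of the LP $\min \sum_{x \in \V} f(x)$ subject to $\sum_{x \in S} f(x) \geq 1$ for all $S \in \E$ and $f \geq 0$; the fractional matching number is the value of $\max \sum_{S \in \E} g(S)$ subject to $\sum_{S \ni x} g(S) \leq 1$ for all $x \in \V$ and $g \geq 0$. Encoding the hypergraph by its incidence matrix $M$ (rows indexed by edges, columns by vertices, with $M_{S,x} = 1$ iff $x \in S$), the transversal LP reads $\min \mathbf{1}^\top f$ subject to $Mf \geq \mathbf{1}$, $f \geq 0$, and the matching LP reads $\max \mathbf{1}^\top g$ subject to $M^\top g \leq \mathbf{1}$, $g \geq 0$. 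These are exactly a dual pair in standard form, so strong duality gives equality of their optima, i.e. $\tau^*(\H) = \nu^*(\H)$, provided both programs are feasible and bounded. Feasibility of the transversal LP is immediate (take $f \equiv 1$, assuming no edge is empty), and the matching LP is feasible via $g \equiv 0$ and bounded above since its value cannot exceed $\tau^*(\H) < \infty$ for a finite hypergraph.

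I expect the only genuine subtlety to be a bookkeeping one rather than a conceptual obstacle: one must confirm that both LPs are well-posed over a finite hypergraph (so that the optima are attained and duality applies without the degenerate cases of an empty edge or an unbounded program), and that the $b$-matching step correctly accounts for the scaling factor $b$. Once the incidence-matrix formulation is in place, strong LP duality does all the work for the central equality, and the two outer inequalities follow by exhibiting the explicit feasible solutions described above. I would present the argument compactly, since each individual step is standard, emphasizing the clean primal-dual symmetry that makes the equality $\nu^*(\H) = \tau^*(\H)$ transparent.
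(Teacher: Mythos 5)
Your proposal is correct and takes exactly the route the paper intends: the paper does not prove Theorem \ref{Theorem:Dual} at all, but simply attributes it to ``a standard use of Linear Programming duality'' with citations, and your argument is precisely that standard argument. The two outer inequalities via explicit feasible solutions (the indicator function of an integral transversal, and the $1/b$-scaling of a $b$-matching) together with strong LP duality applied to the incidence-matrix primal--dual pair, including the finiteness/feasibility caveats, is a complete and faithful filling-in of what the paper leaves to the references.
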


The proof of Theorem \ref{theorem:PQ} by Alon and Kleitman \cite{Alon1992} combines the following key elements:
\begin{itemize}
    \item An abstract hypergraph $\H_0(\F)$, whose edges correspond to the sets of $\F$, is constructed. Each vertex of $\H_0(\F)$ is a point that pierces some sub-family $\F'\subset \F$. (To keep the vertex set finite, we add \emph{one} vertex for each $\F'\subset \F$ with non-empty intersection $\bigcap \F'\neq \emptyset$.)
    \item The fractional matching number $\nu^*(\H_0(\F))=\tau^*(\H_0(\F))$ is bounded  from above using a suitable fractional Helly-type result (Theorem \ref{theorem:Fractional}).
    \item The fractional transversal for $\H_0(\F)$ is converted to an integral one using a weak $\epsilon$-net result for point transversals \cite{Alon1992a}.
\end{itemize}

\subsubsection{Overview.} As we cast the $2$-colored setup of the ``Step-Down'' Lemma into the above abstract framework, several fundamental challenges are to be addressed.

As we seek a \emph{relation} between the transversal numbers of $\A$ and $\B$, we maintain \emph{two hypergraphs} $\H_0(\A)$ and $\H_{k-1}(\B)$, where the former (resp., latter) hypergraph describes partial point (resp., $(k-1)$-flat) transversals to $\A$ (resp., $\B$).
To show that at least one of $\H_0(\A)$ and $\H_{k-1}(\B)$ has a bounded fractional packing number, we need a suitable fractional Helly-type result which is conveniently provided by the fractional variant of our $2$-colored Lemma \ref{proposition:twocolors}.
Finally, to convert a fractional transversal for $\H_{k-1}(\B)$ into an integral one, we need a small-size weak $\epsilon$-net construction for $(k-1)$-flats.

Unfortunately, no Helly-type results and no weak $\epsilon$-net constructions are known for transversals by general $(k-1)$-flats in $\reals^d$, unless $k=1$ \cite{Alon1992} or $k=d$ \cite{Alon1995}.
Note though that, in the scenario of Lemma \ref{Lemma:Iterate}, the pairwise intersections $\I(\A,\B)$ are assumed to ``occur'' within few $k$-dimensional flats of $\reals^d$. 
We can therefore invoke the fractional variant of Lemma \ref{proposition:twocolors} in dimension $k$ and similarly apply the weak $\epsilon$-net construction of Alon and Kalai \cite{Alon1995} for hyperplanes in $\reals^k$.

\subsection{Bounding the fractional packing number}
\label{sec:bic}

Let $\A$ and $\B$ be families of convex sets that satisfy the hypothesis of Lemma \ref{Lemma:Iterate}. That is, the family $\I(\A,\B)$ of pairwise intersections can be crossed by $m$ $k$-flats $\Gamma_1,\dots,\Gamma_m$.

\subsubsection{The hypergraphs $\H_0(\A)$ and $\H_{k-1}(\B)$.} 
Below we define the  abstract hypergraphs $\H_0(\A)$ and $\H_{k-1}(\B)$ which describe, respectively, partial point transversals to $\A$, and partial transversals by $(k-1)$-flats to $\B$.

The hypergraph $\H_0(\A)=(\V_\A,\E_\A)$ is constructed analogously to the one of Alon and Kleitman \cite{Alon1992}: For every subfamily $\A'\subset \A$ with $\bigcap \A'\neq \emptyset$ we add a point $x_{\A'}\in \bigcap \A'$ to $\V_\A$, and for every convex set $A\in \F$ we add the edge $e_A:=\{x_{\A'}\mid \bigcap \A'\neq \emptyset, A\in \A'\}$ to $\E_\A$.

The definition of $\H_{k-1}(\B)=(\V_\B,\E_\B)$ is somewhat more involved: For every subfamily $\B'\subset \B$ that can be crossed by a $(k-1)$-flat \emph{within} $\bigcup_{i=1}^m\Gamma_i$, we add one such $(k-1)$-flat $\sigma_{\B'}\subset \bigcup_{i=1}^m\Gamma_i$ to $\V_\B$. Accordingly, each $B\in \B$ yields the edge 
$$e_{B}:=\{\sigma_{\B'}\mid B \in \B'\}\in \E_\B.$$

To show that \emph{at least one} of the hypergraphs $\H_0(\A)$ or $\H_{k-1}(\B)$ has a bounded fractional packing number, we use the following fractional variant of our $2$-colored Lemma \ref{proposition:twocolors}.

\begin{lemma}[Fractional $2$-colored Lemma]\label{theorem:fractwo}
For every $0<\alpha\leq 1$ and $d\ge 1$ there exist $\gamma=\gamma(\alpha,d)$ and $\lambda=\lambda(\alpha,d)$ with the following property. 
Let $\A$ and $\B$ be finite (multi-)families of convex sets in $\R^d$ so that $A\cap B\neq\emptyset$ holds for at least $\alpha|\A||\B|$ of the pairs $A\in \A$ and $B\in \B$. Then either
	\begin{enumerate}
		\item one can pierce at least $\gamma|\A|$ members of $\A$ by a single point, or 
		\item one can cross at least $\lambda|\B|$ members of $\B$ by a single hyperplane.
	\end{enumerate}
\end{lemma}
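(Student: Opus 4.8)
\textbf{The plan} is to derive the fractional statement from the exact Lemma~\ref{proposition:twocolors} by a standard double-counting / averaging argument, in the spirit of how the Fractional Helly Theorem (Theorem~\ref{theorem:Fractional}) is typically bootstrapped from exact Helly. The key observation is that Lemma~\ref{proposition:twocolors} is a dichotomy about \emph{fully} bi-partite intersecting families, so the task is to extract, from the assumed $\alpha|\A||\B|$ intersecting pairs, a large sub-pair $(\A_0,\B_0)$ that is \emph{completely} cross-intersecting, and then feed it into the exact lemma.

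First I would set a threshold parameter. Since at least $\alpha|\A||\B|$ of the pairs intersect, an averaging argument produces a positive fraction of sets $A\in\A$ (say at least $(\alpha/2)|\A|$ of them) each of which meets at least $(\alpha/2)|\B|$ members of $\B$; call this subfamily $\A^{+}$. The difficulty is that the bi-partite intersection pattern between $\A^{+}$ and $\B$ need not be \emph{complete}, so I cannot directly apply Lemma~\ref{proposition:twocolors}. The cleanest route I foresee is a Ramsey-type / sampling step: I would select a random sub-multiset $\B_0\subseteq\B$ of some fixed size $t=t(\alpha,d)$, and ask that $\B_0$ be cross-intersecting with a large part of $\A$. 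Alternatively, and more robustly, I would iterate the exact lemma on carefully chosen sub-families. The core mechanism is:

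\begin{enumerate}
    \item[(i)] \textbf{Averaging to prune.} Pass to $\A^{+}=\{A\in\A : |\{B\in\B : A\cap B\neq\emptyset\}|\geq (\alpha/2)|\B|\}$, which satisfies $|\A^{+}|\geq (\alpha/2)|\A|$.
    \item[(ii)] \textbf{Building a complete cross-intersecting pair.} Greedily (or via a fractional/LP relaxation on the bi-partite incidence structure) identify a pair of subfamilies $\A_0\subseteq\A^{+}$, $\B_0\subseteq\B$ with $|\A_0|\geq c_1(\alpha)|\A|$ and $|\B_0|\geq c_2(\alpha)|\B|$ such that \emph{every} $A\in\A_0$ meets \emph{every} $B\in\B_0$.
    \item[(iii)] \textbf{Applying the exact lemma.} Invoke Lemma~\ref{proposition:twocolors} on $(\A_0,\B_0)$. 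Either $\bigcap\A_0\neq\emptyset$, giving a single point that pierces all $c_1(\alpha)|\A|$ sets of $\A_0\subseteq\A$ (so $\gamma=c_1(\alpha)$), or $\B_0$ is crossed by $d$ hyperplanes, one of which must cross at least $|\B_0|/d\geq (c_2(\alpha)/d)|\B|$ members of $\B$ (so $\lambda=c_2(\alpha)/d$).
\end{enumerate}

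The main obstacle is step~(ii): turning a density-$\alpha$ bi-partite incidence structure into a \emph{complete} bi-clique of guaranteed size. For abstract hypergraphs this would need the Kővári–Sós–Turán bound and lose too much, but here I expect to exploit the geometry — specifically a fractional Helly bound applied to the sets $\A^+$ relative to points of $\B$, together with the convexity-driven structure, to certify that a constant fraction of $\A$ shares a common point or that many sets of $\B$ lie on a common hyperplane transversal. If a clean geometric bi-clique is hard to force, the fallback is to prove the fractional lemma \emph{directly} by imitating the proof of Lemma~\ref{proposition:twocolors}: assuming no point pierces $\gamma|\A|$ sets of $\A$, a fractional-Helly argument yields, for most $A\in\A^{+}$, a witnessing configuration of $d+1$ sets with empty intersection and hence separating halfspaces, and an averaging over which of the $d$ bounding hyperplanes each $B$ must cross then pins down a single hyperplane meeting $\lambda|\B|$ sets. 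I expect the honest proof to blend these: use Theorem~\ref{theorem:Fractional} to handle the ``$\A$-piercing'' alternative quantitatively, and the separation-halfspace argument of Lemma~\ref{proposition:twocolors} to handle the ``$\B$-crossing'' alternative, with the two alternatives glued by the density hypothesis.
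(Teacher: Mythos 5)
Your primary route (steps (i)--(iii)) fails at step (ii), and provably so, not just for lack of a technique. By Tietze's theorem, \emph{every} finite graph is the intersection graph of convex bodies in $\reals^3$ (hence in $\reals^d$ for all $d\geq 3$). Realizing a random bipartite graph of density $1/2$ with parts $\A$ and $\B$ as such an intersection pattern gives families satisfying the hypothesis of Lemma \ref{theorem:fractwo} with $\alpha=1/2$, yet containing no complete cross-intersecting pair $(\A_0,\B_0)$ in which both sides have size larger than $O(\log|\A|)$. So no argument, geometric or combinatorial, can deliver the bi-clique with $|\A_0|\geq c_1(\alpha)|\A|$ and $|\B_0|\geq c_2(\alpha)|\B|$ that step (ii) demands: the Zarankiewicz-type obstruction you yourself flagged is real in dimension $\geq 3$, and convexity does not remove it there.

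Your fallback names the correct ingredients --- Theorem \ref{theorem:Fractional} for the point alternative and the separating-halfspace argument of Lemma \ref{proposition:twocolors} for the hyperplane alternative --- and this is indeed the paper's skeleton, but the ``glue'' you leave vague is precisely the one new device in the paper's proof, and your sketch of it has the quantifiers backwards. The paper counts \emph{special pairs} $(\A',B)$ with $\A'\in\binom{\A}{d+1}$ and $B$ meeting \emph{every} member of $\A'$: an averaging over heavy members of $\B$ gives at least $2d\lambda|\B|\binom{|\A|}{d+1}$ such pairs; if no point pierces $\gamma|\A|$ sets of $\A$, then by (the contrapositive of) Fractional Helly fewer than $d\lambda\binom{|\A|}{d+1}$ of the $(d+1)$-tuples of $\A$ intersect, so fewer than $d\lambda|\B|\binom{|\A|}{d+1}$ special pairs involve intersecting tuples; hence at least $d\lambda|\B|\binom{|\A|}{d+1}$ special pairs involve \emph{non-intersecting} tuples, and the pigeonhole principle yields a \emph{single} non-intersecting $(d+1)$-tuple $\A_0$ forming special pairs with at least $d\lambda|\B|$ sets of $\B$. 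Only then is the halfspace-separation argument applied, to $\A_0$ and this $\B_0$, and a final pigeonhole over the resulting $d$ hyperplanes produces one hyperplane crossing $\lambda|\B|$ sets. Your fallback instead attaches a witnessing empty-intersection configuration to ``most $A\in\A^{+}$''; since a given $B$ is only forced to cross a bounding hyperplane of a configuration \emph{all} of whose $d+1$ members $B$ meets, different $B$'s then face different configurations, there is no fixed collection of $d$ hyperplanes to average over, and no single hyperplane emerges. The insight you missed is that the bi-clique one needs is lopsided: the $\A$-side needs only $d+1$ sets (with empty intersection), and only the $\B$-side must be a constant fraction of $\B$ --- exactly what the special-pair count supplies, and also why your step (ii), demanding both sides linear, asks for far more than the proof requires.
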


\begin{proof}
    We establish the lemma with $$\lambda:= \frac{\alpha^{d+2}}{4d\cdot 3^{d+1}} \quad \text{ and } \quad \gamma:= \min\left(\beta(d\lambda,d), \frac{\alpha}{6d}\right),$$
    
    \noindent where the function $\beta(\cdot,\cdot)$ is defined as in the Fractional Helly Theorem \ref{theorem:Fractional}. The reasons behind this choice will become evident during the proof.
    
    We may assume $|\A| \geq \frac{6d}{\alpha}$, for otherwise $\gamma|\A|\leq 1$ and the result follows immediately.
    
    For a subset $\A'\in \binom {\A}{d+1}$, and $B\in \B$, we say that $(\A',B)$ is a {\it special pair} if $B$ intersects every set in $\A'$; in other words, $\A'$ and $B$ form a star in the bipartite graph that represents pairwise intersections between the elements of $\A$ and $\B$.
    
    Let $T$ denote the set of all the special pairs $(\A',B)$ as above.
    We first establish a lower bound for the cardinality of $T$. To this end, we claim that there are at least $\frac{\alpha}{2}|\B|$ \textit{heavy} elements $B$ of $\B$ each of which intersects with at least $\frac{\alpha}{2}|\A|$ elements of $\A$. Indeed, otherwise we contradict the hypothesis as the number of pairwise intersections would fewer than $$|\A|\left(\frac{\alpha}{2}|\B|\right)+\left(\frac{\alpha}{2}|\A|\right)|\B|= \alpha|\A||\B|.$$
        
    Let $a=\left\lceil \frac{\alpha}{2}|\A| \right\rceil$ and $b=\left\lceil \frac{\alpha}{2}|\B| \right\rceil$. The discussion above shows that there are at least $b$ heavy elements, each of which appears in at least $\binom{a}{d+1}$ special pairs. Therefore:
        \begin{equation}\label{eq:t1t2}
            |T|\geq b\binom{a}{d+1}\geq \left(\frac{\alpha}{2}|\B|\right) \left(\frac{\alpha}{3}\right)^{d+1}\binom{|\A|}{d+1}=2d\lambda|\B|\binom{|\A|}{d+1}.
        \end{equation}
        
        \noindent The second inequality is obtained as follows, where we use $|\A| \geq \frac{6d}{\alpha}$ at the end:
        
        \begin{equation*}
            \frac{\binom{a}{d+1}}{\binom{|\A|}{d+1}}=\frac{a}{|\A|}\cdot\frac{a-1}{|\A|-1}\cdot \ldots \cdot \frac{a-d}{|\A|-d}\geq\left(\frac{a-d}{|\A|-d}\right)^{d+1}\geq\left(\frac{\frac{\alpha}{2}|\A|-d}{|\A|-d}\right)^{d+1}\geq\left(\frac{\alpha}{3}\right)^{d+1}.
        \end{equation*}
    
    \medskip
    Now, consider the subdivision $T=T_1\uplus T_2$:
    
        \begin{align*}
    T_1:= & \left\{(\A',B)\in T\mid \bigcap_{A\in \A'} A \neq \emptyset\right\}\\
    T_2:= & \left\{(\A',B)\in T\mid \bigcap_{A\in \A'} A=\emptyset\right\}.
      \end{align*}

        If at least $d\lambda\binom{|\A|}{d+1}$ of the $(d+1)$-subfamilies of $\A$ are intersecting, then by the Fractional Helly Theorem \ref{theorem:Fractional} we obtain an intersecting subfamily of $\A$ of size $\gamma |\A|$ and we are done. Therefore, we may assume that less than $d\lambda\binom{|\A|}{d+1}$ of the $(d+1)$-subfamilies of $\A$ are intersecting. Since each of them appears in at most $|\B|$ special pairs of $T_1$, we obtain
        \begin{equation}\label{eq:t1}
            |T_1| < d\lambda|\B|\binom{|\A|}{d+1}.
      \end{equation}
        
        Equations \ref{eq:t1t2} and \ref{eq:t1} imply that $|T_2|\geq d\lambda|\B|\binom{|\A|}{d+1}$. By the pigeon-hole principle there is a non-intersecting $(d+1)$-subfamily $\A_0\subset \A$ that appears in at least $d\lambda|\B|$ special pairs. Let $\B_0$ be the family of all the elements $B$ in $\B$ which yield such a special pair $(\A_0,B)$. Applying Lemma \ref{proposition:twocolors} to $\A_0$ and $\B_0$ we get a collection of $d$ hyperplanes that cross all the sets in $B_0$. Therefore, again by the pigeon-hole principle, one of these hyperplanes crosses at least $\frac{1}{d}|\B_0|\geq\lambda|\B|$ of the sets of $\B$. 
    \end{proof}

Now we prove the following auxiliary statement.

\begin{claim}\label{Claim:Packing}
    We have that either $\nu^*(\H_0(\A))\leq 1/\left(\gamma(1/m,k)\right)$ or $\nu^*(\H_{k-1}(\B))\leq 1/\left(\lambda(1/m,k)\right)$, where $m$, $\H_0(\A)$ and $\H_{k-1}(\B)$ are as defined above, and the functions $\gamma$ and $\lambda$ are defined as in Lemma \ref{theorem:fractwo}.
\end{claim}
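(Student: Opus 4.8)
The plan is to prove the dichotomy by working directly with the fractional matchings that realize the two packing numbers. First I would unwind the definition of $\nu^*$ via Theorem \ref{Theorem:Dual}. An optimal fractional matching of $\H_0(\A)$ is a weight function $w\colon\A\to\reals_{\ge 0}$ of total weight $\nu^*(\H_0(\A))$ whose constraints, read off at the vertices $x_{\A'}$, amount to $\sum_{A\ni p}w(A)\le 1$ for every point $p\in\reals^d$ (the sets through a fixed $p$ form an intersecting subfamily, hence carry a vertex). Likewise an optimal fractional matching of $\H_{k-1}(\B)$ is a weight function on $\B$ of total weight $\nu^*(\H_{k-1}(\B))$ obeying $\sum_{B\,:\,\sigma\cap B\neq\emptyset}w(B)\le 1$ for every $(k-1)$-flat $\sigma\subset\bigcup_i\Gamma_i$. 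Thus the two packing numbers are exactly the maximal total weights of such fractional packings of $\A$ by points and of $\B$ by $(k-1)$-flats inside $\bigcup_i\Gamma_i$.

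Next I would argue by contradiction: assume both $\nu^*(\H_0(\A))>1/\gamma(1/m,k)$ and $\nu^*(\H_{k-1}(\B))>1/\lambda(1/m,k)$, and normalize the two optimal matchings to probability measures $\mu_\A$ on $\A$ and $\mu_\B$ on $\B$. The first inequality says that every point is contained in sets of total $\mu_\A$-measure strictly less than $\gamma(1/m,k)$, and the second that every $(k-1)$-flat inside $\bigcup_i\Gamma_i$ crosses sets of total $\mu_\B$-measure strictly less than $\lambda(1/m,k)$. Since the hypothesis of Lemma \ref{Lemma:Iterate} forces every pair $A\in\A$, $B\in\B$ to intersect, and each intersection $A\cap B$ meets one of $\Gamma_1,\dots,\Gamma_m$, a pigeonhole argument over the product measure $\mu_\A\times\mu_\B$ produces a single flat $\Gamma_{i^*}$ that captures a $\ge 1/m$ fraction of the weighted intersecting pairs.

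I would then localize to $\Gamma_{i^*}\cong\reals^k$, passing to the families $\A^*=\{A\cap\Gamma_{i^*}\}$ and $\B^*=\{B\cap\Gamma_{i^*}\}$ carrying the conditioned measures. Within $\Gamma_{i^*}$ a $\ge 1/m$ fraction of pairs still intersect, so the Fractional $2$-colored Lemma \ref{theorem:fractwo}, applied in dimension $k$ with $\alpha=1/m$, returns either a point $p\in\Gamma_{i^*}$ piercing a $\gamma(1/m,k)$ fraction of $\A^*$, or a hyperplane $\sigma$ of $\Gamma_{i^*}$ — that is, a $(k-1)$-flat lying inside $\bigcup_i\Gamma_i$ — crossing a $\lambda(1/m,k)$ fraction of $\B^*$. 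Lifting back, $p$ pierces the corresponding original sets of $\A$ and $\sigma$ crosses the corresponding original sets of $\B$; in the first case this contradicts the absence of a $\gamma$-heavy point for $\mu_\A$, and in the second the absence of a $\lambda$-heavy $(k-1)$-flat for $\mu_\B$. Either contradiction establishes the claim.

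The main obstacle is precisely this localization step: restricting the families to $\Gamma_{i^*}$ and renormalizing the conditioned measures can inflate the piercing and crossing fractions, so I must track carefully how a heavy point (resp.\ heavy flat) for the conditioned family $\A^*$ (resp.\ $\B^*$) lifts to a heavy point (resp.\ heavy flat) for the full measure $\mu_\A$ (resp.\ $\mu_\B$) at a comparable level — the place where the $1/m$ gained from the pigeonhole step is spent and where the precise dependence of the constants on $m$ and $k$ is pinned down. This localization is also what makes the entire scheme run in the correct dimension: only inside $\Gamma_{i^*}\cong\reals^k$ are the transversals to $\B$ genuine hyperplanes, which is what will later permit invoking the weak $\epsilon$-net construction of Alon and Kalai for hyperplanes in $\reals^k$.
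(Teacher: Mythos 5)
Your strategy is the same as the paper's: assume both fractional matching numbers exceed the stated bounds, read the optimal fractional matchings as weight functions on $\A$ and $\B$ whose loads at every point (resp.\ every $(k-1)$-flat inside $\bigcup_i\Gamma_i$) stay below $\gamma(1/m,k)$ (resp.\ $\lambda(1/m,k)$), pigeonhole the intersecting pairs into a single $\Gamma_{i^*}$, and apply Lemma \ref{theorem:fractwo} with $\alpha=1/m$ inside $\Gamma_{i^*}\cong\reals^k$ to reach a contradiction. However, the step you yourself flag as ``the main obstacle'' is a genuine gap, and it is self-inflicted. If you pass to \emph{conditioned} (renormalized) measures on the sets meeting $\Gamma_{i^*}$, then a point $p$ whose conditioned $\A^*$-weight is $\geq\gamma(1/m,k)$ has original weight only $\geq\gamma(1/m,k)\cdot\mu_\A\bigl(\{A : A\cap\Gamma_{i^*}\neq\emptyset\}\bigr)$, and the pigeonhole step guarantees no more than $\mu_\A\bigl(\{A : A\cap\Gamma_{i^*}\neq\emptyset\}\bigr)\geq 1/m$; so you land at level $\gamma(1/m,k)/m$, which does \emph{not} contradict the assumption that every point has $\mu_\A$-weight $<\gamma(1/m,k)$. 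Recovering the lost factor from the fact that conditioning also inflates $\alpha$ would require quantitative monotonicity properties of $\beta$ from Theorem \ref{theorem:Fractional} that you never establish. The paper's proof avoids the issue entirely by \emph{not} renormalizing: it keeps the full (multi)families, i.e.\ the cross-sections $A\cap\Gamma_{i^*}$, $B\cap\Gamma_{i^*}$ of \emph{all} members (empty ones included) with their original multiplicities, so that ``a $\gamma(1/m,k)$-fraction of the cross-sections'' literally means a $\gamma(1/m,k)$-fraction of $\hat\A$, and a piercing point or crossing $(k-1)$-flat found in $\Gamma_{i^*}$ contradicts the global load bounds at exactly the right level. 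The $1/m$ from the pigeonhole is spent only in the parameter $\alpha$, nowhere else.

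A second, smaller issue: Lemma \ref{theorem:fractwo} is stated for finite (multi-)families, i.e.\ integer multiplicities, not for real-weighted families or probability measures. The paper handles this by taking rational near-optimal fractional matchings and clearing denominators, turning the weights into honest multisets $\hat\A,\hat\B$ \emph{before} invoking the lemma; your measure-theoretic formulation needs either this discretization step or a re-proved weighted version of the lemma. With these two repairs --- drop the renormalization and discretize the weights --- your argument becomes precisely the proof in the paper.
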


\begin{proof}
The fractional packing and fractional transversal numbers exist as we are optimizing continuous functions on a compact set. Moreover, the optimal value may be obtained via a rational approximation. Thus, given the contrapositive assumption, we have a pair of non-negative rational assignments $f: \E_\A\rightarrow \Q$ and $g: \E_\B\rightarrow \Q$ so that the following inequalities hold for all $x_0\in \V_\A$ and $\sigma_0\in \V_\B$:
\begin{align}
    \sum_{x_0\in e}f(e)<&\gamma(1/m,k) \sum_{e\in \E_\A}f(e),\label{Eq:PackingA}\\
    \sum_{\sigma_0\in e}g(e)<&\lambda(1/m,k)\sum_{e\in \E_\B}g(e).\label{Eq:PackingB}
\end{align}

By scaling $f$ and $g$, we end up with a pair of integer functions $f:\E_\A\rightarrow \ZZ^+$ and $g:\E_\B\rightarrow \ZZ^+$ which still satisfy the Inequalities \eqref{Eq:PackingA} and \eqref{Eq:PackingB}. By the definition of $\H_0(\A)$ and $\H_{k-1}(\B)$, this yields a pair of multisets $\hat{\A}$ and $\hat{\B}$ of, respectively, $\A$ and $\B$, so that
\begin{enumerate}
    \item[(i)] no point in $\reals^d$ crosses more than $\gamma(1/m,k)|\hat{\A}|$ members of $\hat{\A}$, and
    \item[(ii)] no $(k-1)$-flat within $\bigcup_{i=1}^m \Gamma_i$ crosses more than $\lambda(1/m,k)|\hat{\B}|$ members of $\hat{\B}$.
\end{enumerate}

By the pigeonhole principle, one of the $k$-flats $\Gamma_i$ must cross at least $(1/m)|\I(\A,\B)|$ of the pairwise intersections $\I(\A,\B)$. Applying Lemma \ref{theorem:fractwo} to the cross-sections $\{A\cap \Gamma_i\mid A\in \hat{\A}\}$ and $\{B\cap \Gamma_i\mid B\in \hat{\B}\}$ within $\Gamma_i\cong \reals^k$, and with $\alpha:=1/m$, yields the eventual contradiction to the above properties (i) and (ii) of $\hat{\A}$ and $\hat{\B}$.
\end{proof}

\subsection{Wrap-up} Combining Claim \ref{Claim:Packing} with Theorem \ref{Theorem:Dual}, we obtain that at least one of the graphs $\H_0(\A)$ and $\H_{k-1}(\B)$ has a bounded \emph{fractional} transversal number, so one of the following inequalities must hold:
$$\tau^*(\H_0(\A))\leq \frac{1}{\gamma(1/m,k)},\qquad
\tau^*(\H_{k-1}(\B))\leq \frac{1}{\lambda(1/m,k)}.$$

Analogously to the proof of Claim \ref{Claim:Packing}, we obtain respectively either a rational (and not everywhere zero) function $f:\V_{\A}\rightarrow \Q^+$ so that every edge $e\in \E_\A$ (representing some set $A\in \A$) contains vertices (i.e., points) of total weight
$$\sum_{x\in e}f(x)\geq \gamma(1/m,k) \sum_{x\in \V_\A}f(x),$$
or a similar function $g:\V_{\B}\rightarrow \Q^+$ so that every edge $e\in \E_\B$ contains vertices of total weight
$$\sum_{\sigma\in e}g(\sigma)\geq \lambda(1/m,k)\sum_{\sigma\in \V_\B}g(\sigma).$$

Arguing as in the proof of Claim \ref{Claim:Packing}, we obtain either (i) a multiset of points $\hat{\V}_\A\subset \reals^d$ so that any member $A$ of $\A$ contains at least $\gamma(1/m,k)|\hat{\V}_\A|$ of these points, or (ii) a multiset $\hat{\V}_\B$ of $(k-1)$-flats within $\bigcup_{i=1}^m\Gamma_i$ so that any member $B$ of $\B$ is crossed by at least $\lambda(1/m,k)|\hat{\V}_\B|$ of the flats.

In the former case, we use Theorem \ref{Theorem:WeakPoints} to show that, in case (i), the family $\A$ can be pierced by
$$F(m,k,d):=W\left(\gamma(1/m,k),0,d\right)$$
points.

In the remaining case (ii), we use the following analogue of Theorem \ref{Theorem:WeakPoints} for hyperplane transversals, due to Alon and Kalai \cite{Alon1995}: 

\begin{lemma}[Weak $\epsilon$-net for hyperplanes]\label{Lemma:WeakPlanes}
    For any dimension $d\geq 1$ and $\epsilon>0$ there is $W_{hpl}(\epsilon,d)$ with the following property: For every finite (multi-)set $H$ of hyperplanes in $\reals^d$ one can find $W_{hpl}(\epsilon,d)$ hyperplanes in $\reals^d$ whose union crosses every convex set $A\subseteq \reals^d$ that meets at least $\epsilon |H|$ of the hyperplanes of $H$.
\end{lemma}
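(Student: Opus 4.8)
The plan is to dualize the problem and reduce it to a weak $\epsilon$-net statement for points, for which we can lean on the machinery already developed. First I would fix the standard point--hyperplane correspondence: parametrize an (oriented) hyperplane $h=\{x:\langle a,x\rangle=b\}$ by the point $h^\ast=(a,b)\in\reals^{d+1}$, and let $H^\ast=\{h^\ast\mid h\in H\}$ be the resulting point multiset. To each convex set $A\subseteq\reals^d$ associate the region $A^\sharp=\{(a,b)\mid h_{a,b}\cap A\neq\emptyset\}$ of parameters of hyperplanes meeting $A$. In these terms, ``$A$ meets at least $\epsilon|H|$ of the hyperplanes of $H$'' becomes $|H^\ast\cap A^\sharp|\ge\epsilon|H^\ast|$, and a hyperplane $g$ crosses $A$ exactly when $g^\ast\in A^\sharp$. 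Thus producing $W_{hpl}(\epsilon,d)$ crossing hyperplanes is equivalent to producing a small point set $N\subset\reals^{d+1}$ that \emph{pierces} $A^\sharp$ for every convex $A$ with $|H^\ast\cap A^\sharp|\ge\epsilon|H^\ast|$; i.e.\ a weak $\epsilon$-net, with respect to $H^\ast$, for the dual family $\{A^\sharp\}$.

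The next step is to exploit the structure of the dual regions. The complement of $A^\sharp$ is the disjoint union $A^+\cup A^-$, where $A^+=\{(a,b)\mid b<\inf_{x\in A}\langle a,x\rangle\}$ and $A^-=\{(a,b)\mid b>\sup_{x\in A}\langle a,x\rangle\}$ collect the hyperplanes having $A$ strictly on one side. Since $\inf_A\langle a,\cdot\rangle$ is concave and $\sup_A\langle a,\cdot\rangle$ is convex in $a$, each of $A^+$ and $A^-$ is a convex cone. Hence every member of the dual family is the complement of a union of \emph{two} convex sets. With this in hand I would run the Alon--Kleitman scheme for weak nets exactly as in the proof of the ``Step-Down'' Lemma: set up the abstract hypergraph whose edges are the dual regions $A^\sharp$ restricted to $H^\ast$, bound its fractional transversal number through a fractional Helly-type estimate for $\{A^\sharp\}$, and finally round the fractional transversal to an integral one. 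The rounding step is the one place where I can recycle Theorem~\ref{Theorem:WeakPoints}: a weak $\epsilon$-net for the two convex pieces $A^+,A^-$ (handled separately and then combined) converts the fractional guarantee into a point set of size depending only on $\epsilon$ and $d$, which pulls back to the desired $W_{hpl}(\epsilon,d)$ hyperplanes.

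The hard part will be the Helly-type input for the non-convex dual family, and this is precisely where Alon and Kalai's argument does its real work. Because each $A^\sharp$ is the complement of two convex cones rather than a convex set, neither classical Helly nor the Fractional Helly Theorem~\ref{theorem:Fractional} applies verbatim, so I would have to establish a bounded Helly number (and thence a fractional Helly bound) for the family of complements-of-two-convex-cones in $\reals^{d+1}$. The cleanest route is to pass to the dual projective space $\reals P^d$, where the set of hyperplanes \emph{missing} a fixed convex body becomes genuinely convex in a suitable affine chart; one then controls the two charts (the two ``sides'' $A^+,A^-$) simultaneously and argues that a bounded number of common dual points must pierce $A^\sharp$ whenever enough of the $A^\sharp$ overlap. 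Granting this fractional Helly bound for the dual family, the remainder of the argument is a routine instantiation of the packing/transversal duality (Theorem~\ref{Theorem:Dual}) together with Theorem~\ref{Theorem:WeakPoints}, following \cite{Alon1995}.
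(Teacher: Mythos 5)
The paper itself never proves Lemma \ref{Lemma:WeakPlanes}: it imports it as a black box from Alon and Kalai \cite{Alon1995}, so your sketch can only be judged against that argument. Your opening moves are correct and do match the standard starting point of \cite{Alon1995}: dualizing $H$ to a point multiset $H^\ast$, and observing that the region $A^\sharp$ of duals of hyperplanes meeting $A$ is the complement of two convex cones $A^+\cup A^-$. The first fatal problem is the step you yourself call ``the hard part'': you propose to \emph{establish a bounded Helly number (and thence a fractional Helly bound)} for this dual family. No bounded Helly number exists --- that statement is precisely a Helly theorem for hyperplane transversals, and the Santal\'o--Danzer examples quoted in the Introduction of this very paper \cite{Santalo1940,Danzer1957} refute it: for every $n$ there are $n$ convex sets in the plane any $n-1$ of which admit a line (i.e.\ hyperplane) transversal while all $n$ do not; dualizing, every $n-1$ of the regions $A_i^\sharp$ share a point although $\bigcap_i A_i^\sharp=\emptyset$. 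So the fractional Helly bound for hyperplane transversals --- which is true, and is the technical core of \cite{Alon1995} --- cannot be obtained ``thence''; it requires a genuinely different proof. The projective-chart remark does not repair this: the chart in which ``hyperplanes missing $A$'' becomes convex is the polar taken at an interior point of $A$, so the chart depends on $A$, and a family with empty common intersection admits no common such chart; that is exactly the obstruction.

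The rounding step fails independently, in two ways. A weak $\epsilon$-net for the ``two convex pieces $A^+,A^-$'' is a non sequitur: these are the components of the \emph{complement} of $A^\sharp$, and piercing them says nothing about hitting $A^\sharp$. Nor can you reduce to Theorem \ref{Theorem:WeakPoints} by locating a heavy convex subset inside a heavy region: take $H$ to consist of $n$ tangent hyperplanes of a ball $A$; then $A$ meets every member of $H$, yet the points of $H^\ast$ lie on the strictly convex (resp.\ strictly concave) graphs bounding $A^\sharp$, so every convex subset of $A^\sharp$ contains at most two points of $H^\ast$. There is also a structural circularity in the plan: in the Alon--Kleitman scheme, the conversion of a fractional transversal into an integral one \emph{is} a weak $\epsilon$-net statement for the family being pierced --- here the family $\{A^\sharp\}$, i.e.\ a weighted form of Lemma \ref{Lemma:WeakPlanes} itself. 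This is why the hyperplane weak net has to be proved by an independent geometric argument (just as its point analogue, Theorem \ref{Theorem:WeakPoints}, is proved independently in \cite{Alon1992a}), rather than extracted from the fractional-Helly-plus-LP machinery that it is designed to feed.
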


For each $1\leq i\leq m$ we apply Lemma \ref{Lemma:WeakPlanes} to construct a weak $(\lambda(1/m,k)/m)$-net with respect to the $(k-1)$-flats $\sigma\in \hat{\V}_\B$ that are contained in $\Gamma_i\cong\reals^k$. It is immediate to check that the resulting family of at most 
$$G(m,k,d):=m\cdot W_{hpl}\left(\frac{\lambda(1/m,k)}{m},k-1,k\right)$$
$(k-1)$-flats crosses each $B\in \B$: Since $B$ is crossed by at least $\lambda(1/m,k)|\hat{\V}_{\B}|$ $(k-1)$-flats of $\hat{\V}_{\B}$, and at least $(1/m)\lambda(1/m,k)|\hat{\V}_{\B}|$ of such flats must be contained in some $k$-flat $\Gamma_i$, then $B$ must be crossed by the corresponding $k$-dimensional net. $\Box$

\section{A lower bound for Theorem \ref{theorem:main-d}}\label{Sec:Example}

\begin{theorem}\label{Thm:LowerBound}
For every $d\geq 2$ and integer $f\geq 1$ there exists a $d$-colored family $\F=\F_1\uplus\F_2\uplus\ldots \uplus\F_d$ in $\reals^d$ that satisfies $\CH(\F_1,\F_2,\ldots,\F_d)$ and the following additional properties:
\begin{itemize}
    \item For every $1\le i\le d$, one needs at least $f$ points to pierce the color class $\F_i$. (In other words, $\tau(\G(\F_i))\geq f$.)
    \item At least $\lceil\frac{d+1}{2}\rceil$ lines are necessary to cross $\bigcup \F_i$.
\end{itemize}
\end{theorem}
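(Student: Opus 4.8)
The plan is to exhibit, for each $d\ge 2$ and each $f$, a single explicit $d$-colored family and then verify its three required properties separately. The colorful Helly property and the piercing lower bounds $\tau(\G(\F_i))\ge f$ should be the routine parts, while the line lower bound is where the real work lies. For the construction I would start from the optimality example behind Theorem~\ref{theorem:CHT} (the axis-orthogonal hyperplane families), in which $\CH$ holds for free because every rainbow selection is a coordinate box and hence has a common point, and each color class is a family of $f$ pairwise disjoint sets so that $\tau\ge f$ is immediate. The whole point, however, is that that example is crossed by a \emph{single} generic line, so I must perturb and offset the sets of each color class so as to destroy cheap common transversals while preserving the product structure that guarantees rainbow intersections. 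Concretely, I would replace the $i$-th hyperplane family by $f$ translated copies of a fixed convex ``plate'' that is thin in the $x_i$-direction, arrange the $f$ copies of color $i$ along an auxiliary line $L_i$, and choose the directions of the $L_i$ to be as ``spread'' as possible, while keeping every plate long enough in the remaining directions to still meet every rainbow partner (so that $\CH$ survives).

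For the line lower bound I would argue by contradiction: assume a set $\mathcal{L}$ of $g:=\lceil\frac{d+1}{2}\rceil-1$ lines crosses $\bigcup_i\F_i$, and derive that some set is missed. The mechanism I expect is that a single line can only ``take care of'' the full color class $\F_i$ when its direction is compatible with the thin direction $x_i$ (equivalently, when its projection in the $\reals^{d-1}$ transverse to $L_i$ passes near all $f$ offsets), and that a fixed line can be simultaneously compatible with at most two of the $d$ colors. Counting then gives $2g\ge d$, i.e.\ $g\ge\lceil d/2\rceil$, and I would recover the extra $+1$ (the gap between $\lceil d/2\rceil$ and $\lceil\frac{d+1}{2}\rceil$) by the same bookkeeping that underlies the Remark after Theorem~\ref{theorem:main-d}: one color is essentially ``free'' to a single parallel line, so the colors cannot be perfectly paired up and at least one line must be spent on a single color. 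Making the phrase ``compatible with at most two colors'' precise — presumably via a generic-projection or linear-algebra argument comparing the line's direction with the $d$ prescribed thin directions — is the technical heart of this step.

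The main obstacle, and the reason the construction must be chosen with care, is the genuine tension between the two features I need. On the one hand, $\CH$ forces every set of one color to meet every set of every other color, which pushes all sets to ``reach into'' a common bounded region and therefore tends to make them easy to stab with a single line; on the other hand, forcing a transversal to be nearly parallel to a prescribed direction requires the sets to be thin and their positions to be spread out, which fights against that reaching. Reconciling the two — keeping the plates unbounded (or very long) precisely in the directions needed to guarantee rainbow intersections, while keeping them thin and offset in the directions that pin down the admissible transversal lines — is the delicate design decision on which everything rests, and I expect that verifying this balance simultaneously yields $\CH$, $\tau(\G(\F_i))\ge f$, and the ``two colors per line'' bound will be the hardest and most computation-heavy part of the argument.
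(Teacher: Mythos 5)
Your construction cannot reach the required bound of $\lceil\frac{d+1}{2}\rceil$ lines, and this is a structural defect, not a matter of missing computation. You attach one ``thin direction'' to each of the $d$ color classes, and your charging scheme assigns each transversal line to at most two colors; even if that scheme were airtight it gives only $2g\ge d$, i.e.\ $g\ge\lceil d/2\rceil$, which falls short by one for every even $d$ (e.g.\ $2$ versus the required $3$ when $d=4$). Your proposed recovery of the missing ``$+1$'' misreads the Remark after Theorem~\ref{theorem:main-d}: that remark is an \emph{upper-bound} statement (some color class can always be crossed by a single line), and an upper bound can never force an adversary to spend an additional line; in your coordinate-aligned construction there is no visible obstruction to pairing the $d$ colors perfectly, e.g.\ by lines in directions $e_i+e_j$. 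The paper's construction solves exactly this problem by manufacturing $d+1$ obstructions out of only $d$ colors: the ambient body is a simplex $\Delta^{(d)}$, and the \emph{single} class $\F_d$ contains $m$ shrunken parallel copies of \emph{every one} of the $d+1$ facets. Since the relative interiors of distinct facets are disjoint and a line meets the boundary of a convex body in at most two points, each line crosses at most two of these $d+1$ facet families, which gives $\lceil\frac{d+1}{2}\rceil$ immediately and with no metric tuning.

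The second gap is the tension you yourself flag, which in the axis-aligned form is fatal rather than ``delicate.'' For a rainbow $d$-tuple to have a common point, each plate of color $j$ must laterally reach the offsets of all other colors, so the lateral radius $R$ of every plate is forced to be at least a constant fraction of the total offset spread $\approx fs$ of each class ($s$ the spacing along the thin direction). But then a line making angle $\theta$ with the $i$-th thin direction drifts only $s\tan\theta$ per plate and can cross on the order of $2R/(s\tan\theta)\gtrsim f/\tan\theta$ plates of color $i$; for any $\tan\theta\lesssim 1$ this is all of them, so ``incompatible'' lines cross entire color classes and the charging argument collapses. Concretely, for small $d$ a single near-diagonal line crosses every plate of every class, and for $d=4$ two lines in direction $(1,1,1,1)$, suitably placed, plausibly cross all of $\F$, below the required three. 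The paper sidesteps all of this: the Colorful Helly property is guaranteed combinatorially by Lemma~\ref{lem:relint} (every colorful intersection meets the \emph{relative interior} of the last set), proved by induction on $d$, which is precisely what licenses shrinking the sets away from the $(d-2)$-skeleton without destroying rainbow intersections; and the piercing bound $\tau(\F_i)\ge f$ comes not from pairwise disjointness but from the planar ``no three inscribed triangles share a point'' construction, lifted to $\reals^d$ by taking convex hulls with complementary vertex sets. To salvage your approach you would need a mechanism of this combinatorial kind; the slab geometry alone cannot simultaneously satisfy $\CH$ and defeat cheap line transversals.
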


We prove the result in the following two subsections. We begin with the case $d=2$ which is later used to deal with the general case.

\subsection{The planar construction}
Let $m=2f$ and $T_0$ be a triangle in the plane so that its bottom side is parallel to the $x$-axis. We first construct $m$ triangles $T_1,\ldots,T_m$, each with one horizontal side and vertices in the relative interiors of the three sides of $T_0$, and such that no three of these triangles $T_i,T_j,T_k$ for $1\leq i<j<k\leq m$ have a common intersection. A way to do this is to construct them recursively: we start with two arbitrary such triangles $T_1$ and $T_2$ and at each step $i>2$ we place the horizontal side of $T_i$ sufficiently close to the horizonal side of $T_0$ so that it avoids all previous pairwise intersections (see Figure \ref{fig:Previous}). Let the first color class $\F_1$ be the resulting family $\{T_1,\ldots,T_m\}$. Clearly we need at least $m/2=f$ points to pierce $\F_1$.

\begin{figure}
    \begin{center}
        \includegraphics{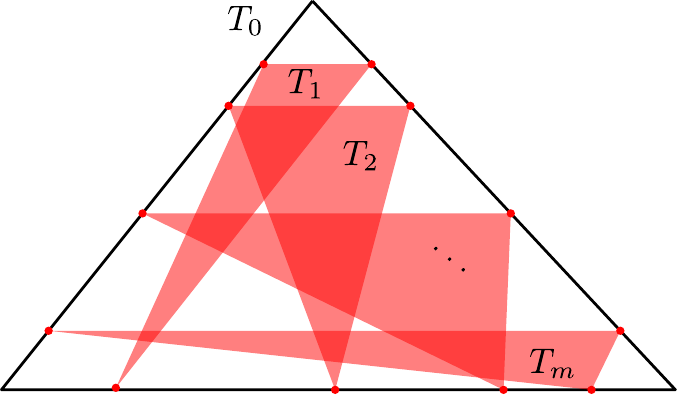}
				\caption{\sf \small The planar construction (for $m=4$). Each triangle $T_i$ has a horizontal topmost side which lies below all the pairwise intersections of $T_1,\ldots T_{i-1}$.}
        \label{fig:Previous}
    \end{center}
\end{figure}

Let $E_1,E_2,E_3$ be the three sides of $T_0$. As each set of $\F_1$ intersects the relative interior of each $E_i$, for $1\leq i\leq 3$, we can slightly shrink each $E_i$ away from its adjacent vertices of $T_0$ while preserving the intersection with every element of $\F_1$. The family $\F_2$ will consist of $m$ slightly translated copies of each (previously shrunk) segment $E_i$ so that they still intersect every triangle in $\F_1$ but are still pairwise disjoint. Note that we need at least $3m>f$ points to pierce $\F_2$.

In order to cross $\F_1\cup\F_2$ with lines, we need in particular to cross the interiors of $E_1,E_2,E_3$, so at least $2$ lines are needed.

\subsection{The general construction}
Set $d>2$ and $m=2f$. Let $\Delta^{(d)}\subset\reals^d$ be a $d$-simplex with vertex set $V=\{v_1,v_2,\ldots,v_{d+1}\}$. For each $1\le i\le d-1$, define $\tau_i$ to be the triangle with vertices $\{v_i,v_{i+1},v_{i+2}\}$.
As in the planar case, let $\T_i$ be a family of $m$ triangles, each with vertices in the relative interiors of the three sides of $\tau_i$, such that no three of them intersect.
Let $\hat{\F}_i^{(d)}$ be the family consisting of the sets $$\conv((V\setminus\{v_i,v_{i+1},v_{i+2}\})\cup \tau),$$
with $\tau\in \T_i$.
Let $\hat{\F}_d^{(d)}$  denote the family of all the $(d-1)$-dimensional faces (facets) of $\Delta^{(d)}$; see Figure \ref{fig:Tetrahedron}.

\begin{figure}
    \begin{center}
        \includegraphics{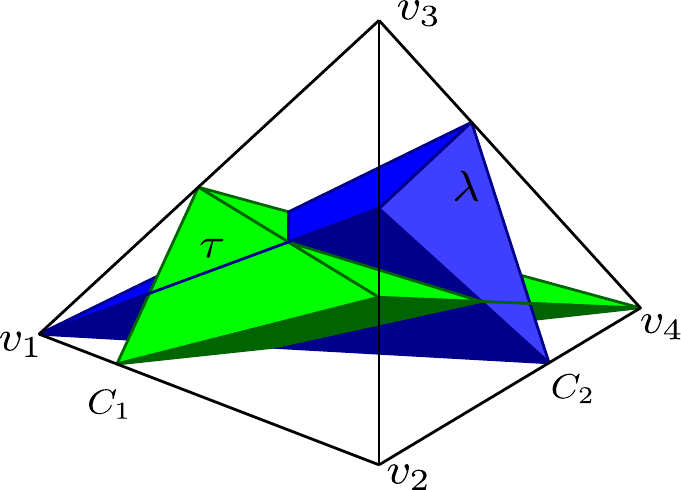}
        \caption{\sf \small The construction of $\hat{\F}^{(3)}$ -- a pair of sets $C_1\in \hat{\F}_1^{(3)}$ and $C_2\in  \hat{\F}_2^{(3)}$ are depicted. We have $C_1=\conv(\tau,v_4)$ and $C_2=\conv(\lambda,v_1)$, with $\tau\in \T_1$ and $\lambda\in \T_2$. The sets of $\hat{\F}^{(3)}_3$ are the facets of the bounding simplex $\Delta^{(3)}$.}
        \label{fig:Tetrahedron}
    \end{center}
\end{figure}

As the resulting $d$-colored family $\hat{\F}^{(d)}=\bigcup\hat{\F}^{(d)}_i$ can obviously be pierced by $d+1$ points, the convex sets in $\hat{\F}$ have to be suitably shrunk in order to satisfy the conditions of Theorem \ref{Thm:LowerBound}. However, before we describe the actual family $\F^{(d)}$, we establish a key property of the families $\hat{\F}^{(d)}_i$.

\begin{lemma}\label{lem:relint}
    For any selection of $C_i\in \hat{\F}_i^{(d)}$ with $2\le i\le d$ we have
    $$\left(\bigcap_{i=1}^{d-1} C_i\right) \cap \relint(C_d) \neq \emptyset,$$
    where $\relint(C)$ denotes the relative interior of $C$.
\end{lemma}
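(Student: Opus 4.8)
The plan is to pass to barycentric coordinates with respect to the bounding simplex $\Delta^{(d)}$, writing a point $p$ as $(p_1,\dots,p_{d+1})$ with $p_j\ge 0$ and $\sum_j p_j=1$. The first step is to record a convenient membership criterion for the sets $C_i$ with $1\le i\le d-1$. Since $C_i=\conv\big((V\setminus\{v_i,v_{i+1},v_{i+2}\})\cup \tau\big)$ for some triangle $\tau\in\T_i$ inscribed in the $2$-face $\tau_i=\conv(v_i,v_{i+1},v_{i+2})$, and the vertices of $\tau$ only carry the barycentric coordinates indexed by $i,i+1,i+2$, a direct computation shows that $p\in C_i$ if and only if the coordinates $p_j$ with $j\notin\{i,i+1,i+2\}$ are used freely as convex weights, while the \emph{marginal triple} $(p_i,p_{i+1},p_{i+2})$, once normalized to sum $1$, lands inside $\tau$. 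Thus membership in $C_i$ is a constraint on the \emph{direction} of the triple $(p_i,p_{i+1},p_{i+2})$ alone, and imposes nothing on the remaining coordinates.

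With this criterion in hand, the lemma reduces to exhibiting a single barycentric point $p$ that simultaneously (i) lies in $\relint(C_d)$ and (ii) satisfies the triple condition for every $1\le i\le d-1$. Writing the given facet as $C_d=\conv(V\setminus\{v_\ell\})$, condition (i) amounts to $p_\ell=0$ together with $p_j>0$ for all $j\neq\ell$. Condition (ii) is a coupled system: consecutive triangles $\tau_i$ and $\tau_{i+1}$ share the two coordinates $i+1,i+2$, so the directions prescribed by overlapping triples cannot be chosen independently, and each inscribed triangle (being a sliver produced by the recursive construction) forces the corresponding direction to lie near one prescribed edge of $\tau_i$.

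I would construct $p$ explicitly, reading off the along-edge positions of the inscribed slivers $\tau^{(i)}$ from the recursive construction. The at most three indices $i$ with $\ell\in\{i,i+1,i+2\}$ are special: there the marginal triple has a vanishing entry (namely $p_\ell=0$), so its normalization is pushed onto a boundary edge of the planar $2$-simplex, where $\tau^{(i)}$ meets that edge in the single point given by its vertex there. Each such constraint pins a fixed positive ratio between two neighboring coordinates of $p$; since the special indices are consecutive and successive constraints involve distinct coordinate pairs, these ratios assemble into one consistent chain that can be realized with strictly positive values. For every ordinary index $i$ the triple has three positive entries, and I would tune the surrounding coordinates so that the normalized triple falls inside $\tau^{(i)}$, using that each coordinate pair $\{p_{i+1},p_{i+2}\}$ is the hugged edge of \emph{exactly one} triple, which lets me fix the along-edge ratios one at a time while keeping the opposite coordinate small enough to stay within the sliver. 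A final check confirms that every coordinate except $p_\ell$ is strictly positive, so $p\in\relint(C_d)$, and the same point lies in the chosen $C_1,\dots,C_{d-1}$.

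The main obstacle is step (ii) in the vicinity of $v_\ell$: reconciling the boundary directions forced by the triples that contain $\ell$ with the interior directions demanded by the adjacent overlapping triples. This is exactly where the precise placement of the inscribed triangles in the recursive construction is needed, so that a single coordinate vector threads all the triple conditions at once; I expect the verification to hinge on the slivers being positioned along the appropriate shared edges. The degenerate cases $\ell\in\{1,2,d,d+1\}$, where fewer than three triples contain $\ell$ and the ratio chain is shorter, should follow by the same argument with lighter bookkeeping.
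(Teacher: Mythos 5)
Your barycentric reduction is correct and genuinely different from the paper's proof (the paper argues by induction on the dimension, intersecting with the hyperplane spanned by $v_1,\dots,v_d$ and then running a three-case separation argument inside the facet $C_d$). Your membership criterion is right: $p\in C_i$ iff the normalized triple $(p_i,p_{i+1},p_{i+2})$ lies in the inscribed triangle $\tau^{(i)}\in\T_i$ (or the triple sums to zero), and your analysis of the special indices is also right — writing $C_d=\conv(V\setminus\{v_\ell\})$, each index $i$ with $\ell\in\{i,i+1,i+2\}$ forces the normalized triple onto the edge of $\tau_i$ opposite $v_\ell$, which meets $\tau^{(i)}$ only at its single vertex there, pinning one ratio of coordinates; the pinned ratios involve distinct pairs and are consistent. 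The gap is exactly the step you flag as the ``main obstacle'': you never prove that the ordinary triple conditions can be threaded through these pinned ratios, and the resolution you sketch leans on the inscribed triangles being ``slivers'' hugging a prescribed edge of $\tau_i$. That property is not available to you: in the construction the first two triangles of each family $\T_i$ are \emph{arbitrary} inscribed triangles (only the later ones are pushed toward one side), the lemma quantifies over all selections, and for $d>2$ the construction never even specifies which edge of $\tau_i$ plays the role of the ``bottom'' side. So the deferred verification, as sketched, is unjustified for some admissible selections, and the proposal as it stands is not a proof.

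The plan can, however, be completed with no sliver assumption. The one fact needed is: if $\tau'$ is inscribed in a triangle $ABC$ with one vertex in the relative interior of each side, then for every $X\in\relint(BC)$ the cevian $AX$ meets $\tau'$ in a point whose three barycentric coordinates are all positive — because the chord of $\tau'$ joining its vertices on $AB$ and $AC$ separates $A$ from $BC$ inside $ABC$, and that chord avoids both $A$ and the line through $BC$. Since fixing the ratio $p_{i+2}/p_{i+1}$ and varying $p_{i+1}/p_i$ traces exactly a cevian of $\tau_i$ issued from $v_i$ (and symmetrically from $v_{i+2}$ when the other ratio is fixed), you can solve the block of coordinates $\{1,\dots,\ell-1\}$ backwards starting from the pinned ratio $p_{\ell-1}/p_{\ell-2}$, solve the block $\{\ell+1,\dots,d+1\}$ forwards starting from the pinned ratio $p_{\ell+2}/p_{\ell+1}$, and splice the two blocks using the pinned cross ratio $p_{\ell+1}/p_{\ell-1}$; each ordinary constraint is then satisfiable step by step by the cevian fact, no constraint has both of its ratios pre-pinned, and all coordinates other than $p_\ell$ come out strictly positive. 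With that lemma inserted in place of the sliver heuristic, your argument becomes a valid, self-contained alternative to the paper's induction.
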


\begin{proof}
    We proceed by induction on the dimension $d$. For $d=2$ we define the families in a similar way as for $d>2$. Then, when $d=2$, the colored family $\hat{\F}^{(1)}$ is essentially the same as in the planar case, where by definition each triangle in $\F_1$ intersects the relative interiors of the sides of $T\simeq\tau_1$, which are precisely the elements of $\F_2$.
    
    Now assume that $d>2$ and the statement is true in dimension $d-1$.
    Note that the cross-sections of $\hat{\F}^{(d)}_1\cup\dots\cup\hat{\F}^{(d)}_{d-2}$ with the hyperplane $\pi_{d-1}$ spanned by $v_1,\ldots,v_{d}$ form the first $d-2$ color classes $\hat{\F}^{(d-1)}_1\cup\dots\cup\hat{\F}^{(d-1)}_{d-2}$ of the $(d-1)$-dimensional family $\hat{\F}^{(d-1)}$ (constructed with respect to $\Delta^{(d-1)}=\conv(v_1,\ldots,v_d)$ in $\pi_{d-1}\simeq \reals^{d-1}$); the last $(d-1)$-th color $\hat{\F}^{(d-1)}_{d-1}$ is composed of all the $(d-2)$-dimensional faces of $\conv(\{v_1,\dots,v_d\})$. Therefore, by induction, for every $(d-2)$-colorful selection of $C_1\in \hat{\F}_1^{(d)}, C_2\in \hat{\F}_2^{(d)},\ldots ,C_{d-1}\in \hat{\F}_{d-2}^{(d)}$ and a $(d-2)$-simplex $\sigma\in \bd \left(\Delta^{(d-1)}\right)$ we obtain
    $$\left(\bigcap_{i=1}^{d-2} C_i\right) \cap \relint(\sigma) \neq \emptyset.$$

\begin{figure}
    \begin{center}
        \includegraphics[width=0.85\textwidth]{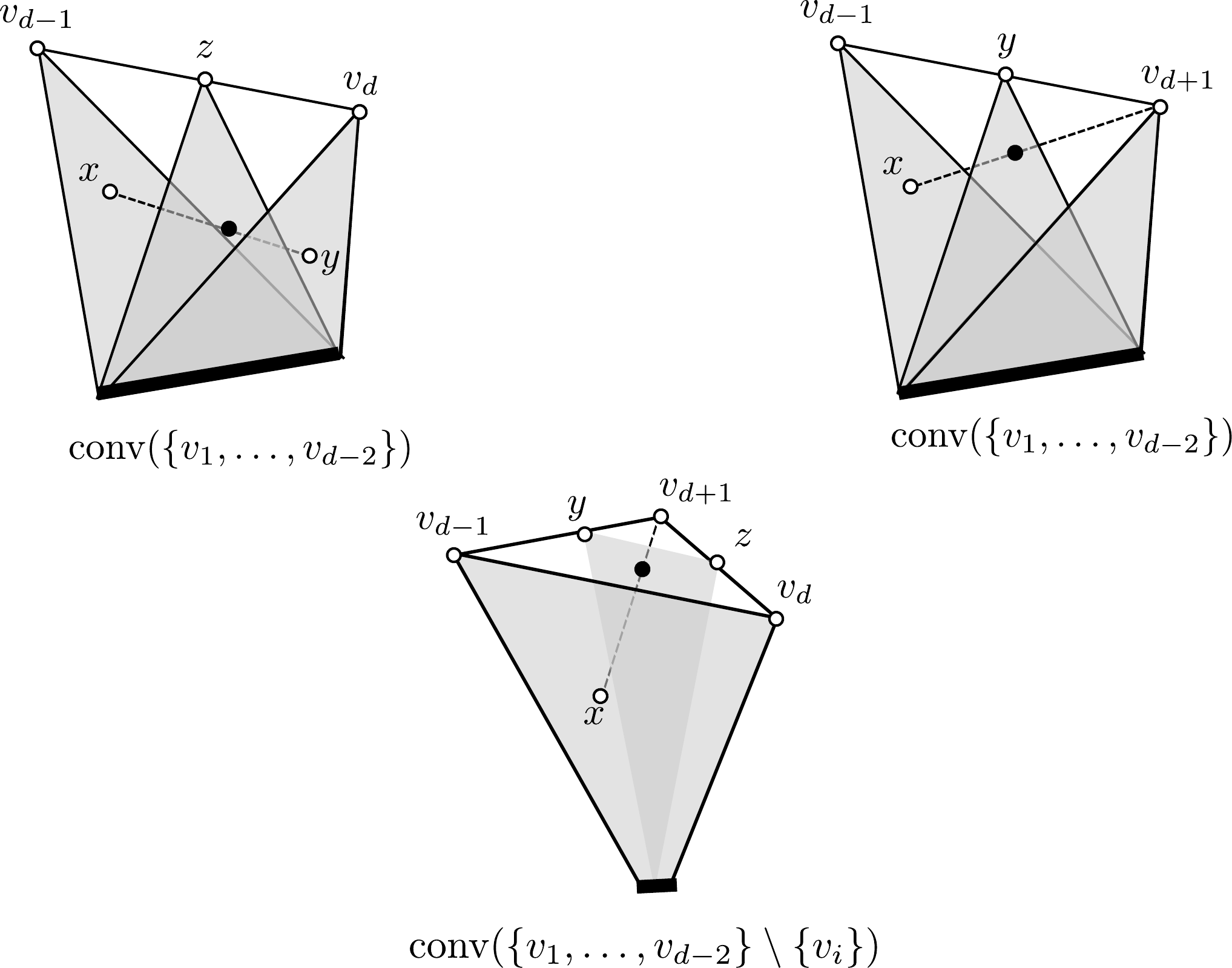}
        \caption{\sf \small Proof of Lemma \ref{lem:relint}. The three main cases are depicted. Left: In the first case, $C_d=\conv(V\setminus \{v_{d+1}\})$. Right:  $C_d=\conv(V\setminus \{v_d\})$. Bottom: $C_d=\conv(V\setminus \{v_i\})$ for $1\leq i\leq d-2$.}
        \label{fig:Cases}
    \end{center}
\end{figure}

   Consider a $d$-colorful choice $C_i\in \hat{\F}_i^{(d)}$ with $1\le i\le d$. In order to show that $\bigcap_{i=1}^{d-1} C_i$ intersects the relative interior of $C_d$, we distinguish between three cases.  In each case, we use the induction hypothesis to pick a pair of points $\bigcap_{i=1}^{d-2}C_i$ on different faces of $C_d$ which span an open segment $s$ in the relative interior of $C_d$. We then use the definition of $C_{d-1}$ to argue that it must intersect $s$. See Figure \ref{fig:Cases}.
    
    \begin{enumerate}
        \item If $C_d=\conv(V\setminus \{v_{d+1}\})$, by the induction hypothesis we know that the intersection $\bigcap_{i=1}^{d-2} C_i$ has points in the interiors of the facets $\conv(\{v_1,\ldots,v_{d-2},v_{d-1}\})$ and $\conv(\{v_1,\ldots,v_{d-2},v_{d}\})$ of $C_d$, say $x$ and $y$ respectively. On the other hand, since $C_{d-1}$ has a point in each edge of the triangle $\tau_{d-1}=\conv(v_{d-1},v_d,v_{d+1})$, it has a point $z$ in the interior of the segment $v_{d-1}v_{d}$. By definition, $C_{d-1}$ contains the $(d-2)$-dimensional simplex $C:=\conv(\{v_1,\ldots,v_{d-2},z\})$. By continuity of the barycentric coordinates, it is easy to verify that $C$ must separate $x$ and $y$ within the $(d-1)$-simplex $C_{d}$. Hence, $C_{d-1}\supset C$ must intersect the segment $s=xy$ in its interior, which is a point in the relative interior of $C_d$.

        \item The cases $C_d=\conv(V\setminus \{v_{d}\})$ and $C_d=\conv(V\setminus \{v_{d-1}\})$ are analogous, so we may assume that we are in the former case. By the induction hypothesis we know that there is a point of $\bigcap_{i=1}^{d-2} C_i$ in the relative interior of $\conv(\{v_1,\ldots,v_{d-2},v_{d-1}\})$, say $x$. Therefore $\bigcap_{i=1}^{d-2} C_i$ also contains the segment $xv_{d+1}$, which is contained in the face $C_d$. We claim that $C_{d-1}$ must intersect the interior of this segment. Indeed, let $y$ be the point of $C_{d-1}$ in the segment $v_{d-1}v_{d+1}$. Then $C_{d-1}$ contains  $C=\conv(\{v_1,\ldots,v_{d-2},y\})$. This set must intersect the segment $s=xv_{d+1}$ as desired (for it separates $x$ from $v_{d+1}$ within $C_d$).
        
        \item Finally, assume that $C_d=\conv(V\setminus \{v_{i}\})$ for some $1\le i\le d-2$. As in the previous case, we can find a point of the intersection $\bigcap_{j=1}^{d-2} C_j$ in the interior of the face $\conv(\{v_1,\ldots,v_{d}\}\setminus \{v_i\})$, say $x$. Now we select points $y,z$ of $C_{d-1}$ in the interiors of the segments $v_{d-1}v_{d+1}$ and $v_{d}v_{d+1}$ respectively. The set $\conv(\{y,z,v_1,\dots,v_{d-2}\}\setminus\{v_i\})$ is contained in $C_{d-1}$ and separates $x$ and $v_{d+1}$. Therefore $C_{d-1}$ intersects $s=xv_{d+1}$ in its relative interior as before.\qedhere
    \end{enumerate}
\end{proof}

We are almost done with the construction. In view of Lemma \ref{lem:relint}, we may shrink all the elements of $\hat{\F}^{(d)}$ away from the $(d-2)$-dimensional faces of $\Delta^{(d)}$ in such a way that they remain convex and the colorful intersections continue to be non-empty. In this way we obtain the families $\F_1^{(d)},\dots,\F_{d-1}^{(d)}$. To construct the last family $\F_d^{(d)}$, we take an additional step: we take $m$ parallel copies of each so that they still intersect every element of $\F_1\cup\dots\cup\F_{d-1}$ but are pairwise disjoint.

By the cut-off procedure, no three sets of the same $\F_i$ intersect for $i\in[d-1]$ (as any such intersection would project to a triple intersection within $\T_i$). Thus, in order to pierce any such $\F_i$ at least $\frac{m}{2}=f$ points are needed. To cross $\F=\F_1\cup\dots\cup\F_d$ by lines we also need to cross the relative interiors of the facets of $\Delta$. No line can pierce more than two such interiors. Therefore, at least $\lceil\frac{d+1}{2}\rceil$ lines are needed. This concludes the proof of Theorem \ref{Thm:LowerBound}.

\section{Discussion}\label{Sec:Discus}
We studied families of convex sets which satisfy the Colorful Helly hypothesis. 
Our Theorems \ref{theorem:main} and \ref{theorem:structure} offer complementary relations between the ``transversal dimensions'' of individual color classes.

We conjecture that an even stronger phenomenon happens:

\begin{conjecture}\label{Conjecture}
    For all $1\leq k\leq d$ there exist numbers $h(k,d)$ with the following property.
    For any $d$-colored family $\F$ of convex sets in $\R^d$ with $\CH(\F_1,\dots,\F_d)$ there exist numbers $k_1,\dots,k_d$ so that
    \begin{enumerate}
        \item $\sum_{1\leq i\leq d}k_i\leq d$, and
        \item each color class $\F_i$, for $i\in [d]$, can be crossed by $h(k_i,d)$ $k_i$-flats. 
    \end{enumerate}
\end{conjecture}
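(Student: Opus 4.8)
The plan is to exhibit, for every $d\geq 2$ and every integer $f\geq 1$, an explicit $d$-colored family realizing both lower bounds at once, splitting the argument into a planar base case and an inductive simplex construction whose colorful intersections are controlled by Lemma \ref{lem:relint}.

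I would first handle $d=2$ directly, since it both anchors the induction and motivates the general step. Fix $m=2f$ and a triangle $T_0$ with a horizontal bottom side. Build $\F_1$ as $m$ inscribed triangles $T_1,\dots,T_m$, each having one horizontal side and its three vertices in the relative interiors of the three sides of $T_0$, added greedily so that the horizontal side of $T_i$ lies strictly below every pairwise intersection of $T_1,\dots,T_{i-1}$; this guarantees that no three of the $T_i$ share a point, so $\tau(\F_1)\geq m/2=f$. For $\F_2$, slightly shrink the sides $E_1,E_2,E_3$ of $T_0$ away from the vertices (keeping the intersection with every $T_i$) and replace each shrunk segment by $m$ nearby parallel translates that still meet every triangle of $\F_1$ but are pairwise disjoint, giving $\tau(\F_2)\geq f$. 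Since each $T_i$ meets the relative interior of every $E_j$, all colorful pairs are nonempty and $\CH(\F_1,\F_2)$ holds. Finally, any line transversal of $\F_1\cup\F_2$ must cross the relative interiors of the three sides of $T_0$, and a line meets the boundary of the convex set $T_0$ in at most two points, forcing at least $\lceil 3/2\rceil=2$ lines.

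For general $d$ I would carry this out on a $d$-simplex $\Delta^{(d)}=\conv(v_1,\dots,v_{d+1})$ using the unshrunk families of the excerpt: for $1\le i\le d-1$ the class $\hat{\F}_i^{(d)}$ consists of the cones $\conv((V\setminus\{v_i,v_{i+1},v_{i+2}\})\cup\tau)$ with $\tau$ ranging over $m$ inscribed triangles of the $2$-face $\tau_i=\conv(v_i,v_{i+1},v_{i+2})$ chosen (exactly as in the plane) with no triple intersection, while $\hat{\F}_d^{(d)}$ is the family of facets of $\Delta^{(d)}$. The decisive input is Lemma \ref{lem:relint}: every colorful selection $C_1,\dots,C_d$ satisfies $\big(\bigcap_{i=1}^{d-1}C_i\big)\cap\relint(C_d)\neq\emptyset$. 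Because this intersection meets the \emph{relative interior} of the facet $C_d$, there is room to retract every member of $\hat{\F}^{(d)}$ slightly away from the $(d-2)$-faces of $\Delta^{(d)}$ so that the sets stay convex, the simplex vertices cease to be a common piercing point, yet all colorful intersections survive; this produces $\F_1^{(d)},\dots,\F_{d-1}^{(d)}$ and establishes $\CH(\F_1,\dots,\F_d)$. As in the plane, I then replace each shrunk facet by $m$ parallel translates still meeting $\F_1\cup\dots\cup\F_{d-1}$ but pairwise disjoint, yielding $\F_d^{(d)}$.

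It remains to verify the two quantitative bounds, and to locate the genuine difficulty. For piercing, the inscribed-triangle choice ensures no three triangles of $\T_i$ meet, and any triple intersection inside a single class $\F_i$ with $i\le d-1$ would project to a triple intersection in $\T_i$; hence no three sets of $\F_i$ share a point and $\tau(\F_i)\ge m/2=f$, while $\F_d$ is pairwise disjoint. For the line bound, any line crossing all of $\F$ must in particular meet the relative interiors of all $d+1$ facets of $\Delta^{(d)}$, and since a line meets the boundary of the convex body $\Delta^{(d)}$ in at most two points it can reach at most two facet-interiors, forcing at least $\lceil (d+1)/2\rceil$ lines. The main obstacle is precisely the shrinking step: I must guarantee that one uniform retraction away from the $(d-2)$-skeleton simultaneously preserves convexity of every set and all $\prod_i|\F_i|$ colorful intersections. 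This is exactly why Lemma \ref{lem:relint} is stated with $\relint(C_d)$ rather than $C_d$: the open slack it supplies is what lets a sufficiently small, uniform perturbation be absorbed without destroying any colorful intersection, which completes the proof of Theorem \ref{Thm:LowerBound}.
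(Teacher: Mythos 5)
Your proposal does not address the statement you were asked to prove. The Conjecture is a \emph{universal} claim: for \emph{every} $d$-colored family $\F$ in $\R^d$ satisfying $\CH(\F_1,\dots,\F_d)$, one can find dimensions $k_1,\dots,k_d$ with $\sum_i k_i\leq d$ such that each class $\F_i$ is crossed by $h(k_i,d)$ $k_i$-flats. What you have written is a construction of a \emph{single} family — in fact, it is essentially the paper's proof of Theorem \ref{Thm:LowerBound} (the planar inscribed-triangle construction, the simplex cones $\conv((V\setminus\{v_i,v_{i+1},v_{i+2}\})\cup\tau)$, Lemma \ref{lem:relint}, the shrinking away from the $(d-2)$-skeleton, and the $\lceil\frac{d+1}{2}\rceil$-lines count). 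An explicit example can only establish existential or lower-bound statements (e.g.\ that $g'(d)\geq\lceil\frac{d+1}{2}\rceil$ in Theorem \ref{theorem:main-d}, or that the budget $\sum_i k_i\leq d$ in the Conjecture cannot be improved); it cannot prove a statement quantified over all CH families. Nothing in your argument engages with an arbitrary family, so the universal quantifier is simply never discharged.

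You should also be aware that the paper does not prove this statement at all: it is stated as a conjecture and left open. The authors only observe that it is sharp for families of flats, and they reduce its most elementary nontrivial instance ($d=3$, $\F_3=\{\reals^3\}$, with neither $\F_1$ nor $\F_2$ pierceable by few points) — via Lemma \ref{proposition:twocolors}, which yields plane transversals for both classes — to the open Problem of whether two cross-intersecting families in $\reals^3$ must have one member crossable by $O(1)$ lines. So a correct proof would require genuinely new ideas beyond anything in the paper; in particular it would have to resolve that Problem, which your construction does not touch.
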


It is easy to check that Conjecture \ref{Conjecture} is sharp for families of flats.
The most elementary instance of the conjecture arises for $d=3$ and $\F_3=\{\reals^3\}$. The remaining two classes $\F_1$ and $\F_2$ satisfy a 2-colored hypothesis.
If one of the classes has a transversal by few points, then Conjecture \ref{Conjecture} holds for the families, as the other class can simply be pierced by $\mathbb{R}^3$. Otherwise,
by Lemma \ref{proposition:twocolors} both $\F_1$ and $\F_2$ can be pierced by few planes. Then the validity of Conjecture \ref{Conjecture} in this case depends on the answer to the following question:
\begin{problem}
    Is it true that for any two families $\A,\B$ of convex sets in $\reals^3$ so that $A\cap B\neq \emptyset$ holds for all $A\in \A$ and $B\in \B$, one of the families $\A$ or $\B$ can be crossed by $O(1)$ lines?
\end{problem}

Another intriguing question is what are the ``true'' values of $f'(d)$ and $g'(d)$ for Theorem \ref{theorem:main-d} or, more precisely, what is the relation between these parameters? For example, does the theorem still hold with $f'(d)=1$ and large enough $g'(d)$, as it happens for $d=2$?

\paragraph{Acknowledgements.} The authors thank the anonymous SoCG referees for valuable comments which helped to improve the presentation.

\bibliography{colorbib}
\bibliographystyle{amsplain}

\end{document}